\newtheorem{mainthm}{Theorem}
\newtheorem{theorem}{Theorem}[section]
\newtheorem{definition}[theorem]{Definition}
\newtheorem{proposition}[theorem]{Proposition}
\newtheorem{corollary}[theorem]{Corollary}
\newtheorem{remark}[theorem]{Remark}
\newtheorem{examplecore}[theorem]{Example}}
\newcommand{\Ao}{\ensuremath{\mathbb{A}^1}}
\newcommand{\hocolim}{\ensuremath{\operatornamewithlimits{hocolim}}}
\newcommand{\Sing}{\ensuremath{\operatorname{Sing}_\bullet^{\Ao}}}
\begin{document}

\title{More Examples of Motivic Cell Structures}   
\author{Matthias Wendt}

\date{November 2010}

\address{Matthias Wendt, Mathematisches Institut,
%Albert-Ludwigs-
Uni\-ver\-si\-t\"at Freiburg, Eckerstra\ss{}e 1, 79104,
  Freiburg im Breisgau, Germany}
\email{matthias.wendt@math.uni-freiburg.de}

\subjclass{14F42}
\keywords{stable $\Ao$-homotopy theory, cell decomposition}

\begin{abstract}
In this note, we describe motivic cell structures arising from the
Bia{\l}ynicki-Birula decomposition. This provides a description of
stable $\Ao$-homotopy types of smooth projective
$\mathbb{G}_m$-varieties where the $\mathbb{G}_m$-action has isolated
fixed points.  
\end{abstract}

\maketitle
\setcounter{tocdepth}{1}
\tableofcontents

\section{Introduction}

In this note, we describe motivic cell structures for some smooth
projective schemes. The term motivic cell structure was coined in a
paper of Dugger and Isaksen \cite{dugger:isaksen}, as an
$\Ao$-homotopy analogue of CW-structures in algebraic
topology. Motivic cell structures describe how a scheme or a
simplicial sheaf is built out of spheres (whenever that happens to be
possible), and so motivic cell structures can in principle be used to
compute arbitrary generalized cohomology theories. Several examples of
motivic cell structures have been given in \cite{dugger:isaksen}, and
it is the goal of this note to add to the available list.  

For a smooth compact manifold, one can use Morse functions and their
associated gradient flows to obtain a CW-structure on the
manifold. In the algebraic setting, there are no Morse functions
available, any algebraic map $X\rightarrow\Ao$ from a projective
variety to the affine line is constant. However, one can interpret the
gradient flow associated to a Morse function as an
$\mathbb{R}^\times$-action on the manifold. In this formulation, the
existence of CW-structures associated to Morse functions has an
algebraic analogue - a cell structure exists for smooth projective
varieties with a $\mathbb{G}_m$-action with isolated fixed points. 
For these varieties it is already known that the Bia{\l}ynicki-Birula
filtration allows to produce splittings of motives of these varieties,
cf. \cite{brosnan}. We show here that the Bia{\l}ynicki-Birula
filtration actually carries more information, 
which allows to extract a motivic cell structure. For this, however,
we need to assume that the multiplicative group acts with isolated
fixed points. For projective homogeneous varieties under split
reductive groups, we even get unstable cell structures from the
Schubert cell decomposition.  This solves a question posed in
\cite[Remark 4.5]{dugger:isaksen}. 

\begin{mainthm}
\label{main1}
Let $k$ be a field, and let $X$ be a smooth projective variety
equipped with an action of $\mathbb{G}_m$ with isolated fixed
points. Then $X$ is stably cellular. If $X$ is a projective
homogeneous space under a split connected reductive group, then $X$ is
also unstably cellular. 
\end{mainthm}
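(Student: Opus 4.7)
The plan is to exploit the Bia{\l}ynicki-Birula decomposition together with Morel-Voevodsky homotopy purity to exhibit $X$ as a finite iterated cofiber of motivic spheres. Under the hypotheses, for each isolated fixed point $p_i$ the attracting set $X_i^+ := \{x \in X \mid \lim_{t\to 0} t \cdot x = p_i\}$ is a locally closed subscheme of $X$ isomorphic to an affine space $\mathbb{A}^{d_i}$, and one may order the fixed points (for instance by dimension of the attracting cell) so that the partial unions $Y_j := \bigsqcup_{i\leq j} X_i^+$ assemble into a filtration of $X$ by closed subschemes
\[ \emptyset = Y_0 \subset Y_1 \subset \cdots \subset Y_n = X, \qquad Y_j \setminus Y_{j-1} = X_j^+ \cong \mathbb{A}^{d_j}. \]

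Set $U_j := X \setminus Y_j$; this gives an increasing chain $\emptyset = U_n \subset U_{n-1} \subset \cdots \subset U_0 = X$ of smooth open subschemes of $X$. For each $j$, the cell $X_j^+$ sits as a smooth closed subscheme of $U_{j-1}$ with open complement $U_j$, so Morel-Voevodsky homotopy purity delivers a cofiber sequence in the pointed $\Ao$-homotopy category
\[ U_j \;\longrightarrow\; U_{j-1} \;\longrightarrow\; \mathrm{Th}\bigl(N_{X_j^+/X}|_{X_j^+}\bigr). \]
Because $X_j^+ \cong \mathbb{A}^{d_j}$ is an affine space, every vector bundle on it is trivial, and so the Thom space is $\Ao$-equivalent to the motivic sphere $T^{c_j}$ with $c_j = \dim X - d_j$. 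Starting from $U_n = \emptyset$ and iterating these cofiber sequences exhibits $U_0 = X$ as a finite iterated cofiber of motivic spheres, which suffices for stable cellularity.

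For the unstable refinement in the projective homogeneous case $X = G/P$, one picks a generic cocharacter of a split maximal torus so that the Bia{\l}ynicki-Birula filtration coincides with the Bruhat filtration by closures of Schubert cells. The argument above already supplies an iterated cell-attachment presentation of $X$, but in the unstable $\Ao$-homotopy category one must verify that the successive attachments are honest pushouts of motivic disks along motivic sphere boundaries and glue to realize $X$ as a genuine unstable cell complex. I expect this to be the main obstacle, since, unlike the stable case, it cannot be read off from abstract cofiber-sequence manipulations and requires extra geometric input. The intended input is the additional rigidity coming from the Schubert geometry: each Schubert variety is resolved by a Bott-Samelson variety built as an iterated $\mathbb{P}^1$-bundle tower, which is unstably cellular by a straightforward induction on the length of the reduced word, and a comparison of these resolutions with the Schubert filtration should transport the unstable cell structure stage by stage along the Bruhat order.
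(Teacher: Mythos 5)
Your argument for \emph{stable} cellularity is correct and is essentially the paper's own proof: one uses the Bia{\l}ynicki-Birula decomposition to get a filtration with affine-space strata, passes to open complements to stay within the world of smooth pairs, applies Morel--Voevodsky homotopy purity to get cofiber sequences, and uses triviality of vector bundles on affine space to identify the Thom spaces with motivic spheres; the Dugger--Isaksen two-out-of-three lemma for stable cellularity then finishes by induction. The only small caveat is that the ordering of the fixed points is not simply ``by dimension of the attracting cell''; what you need is the ordering guaranteed by the BB theorem itself, which produces a filtration by \emph{closed} subschemes with the required stratum structure, but since you invoke BB anyway this is cosmetic.

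The \emph{unstable} part is where your proposal has a genuine gap. You correctly diagnose the problem (a stable cofiber sequence does not give an unstable cell attachment) but your proposed remedy --- Bott--Samelson resolutions as iterated $\mathbb{P}^1$-bundles, followed by some ``transport'' of a cell structure along the resolution --- is left entirely unexecuted, and it is far from routine: the Bott--Samelson variety maps to a Schubert variety by a proper birational morphism that is not an $\Ao$-equivalence in general, and it is not clear how to push an unstable cell structure forward along such a map, nor how these would glue across the Bruhat order. The paper's actual argument is different and more elementary. At the $i$-th stage, one uses the Weyl group action to translate the cell $\mathbb{A}^{n_i}$ (attached at the fixed point $x_i$) isomorphically onto a linear subspace of the \emph{big} open cell $X\setminus X_n\cong\mathbb{A}^{\dim X}$. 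This produces a Zariski \emph{open} cover of $X\setminus X_{i-1}$ by $X\setminus X_i$ and a copy of $\mathbb{A}^{\dim X}$, whose intersection is $\mathbb{A}^{\dim X}\setminus\mathbb{A}^{n_i}\simeq \mathbb{A}^{\dim X - n_i}\setminus\{0\}$. Since an elementary Nisnevich (here Zariski) distinguished square is a homotopy pushout already in the unstable $\Ao$-homotopy category, this gives the unstable cofiber sequence $\mathbb{A}^{\dim X-n_i}\setminus\{0\}\to X\setminus X_i\to X\setminus X_{i-1}$ directly, with no resolution of singularities or comparison of filtrations needed. This open-cover trick is the key idea you are missing; without it (or a fully worked-out substitute), the unstable half of the theorem is not proved.
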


The motivic cell structures described above immediately induce
splittings of the motives associated to these varieties. In the case
of the smooth projective varieties of Theorem \ref{main1}, the motivic 
splittings are the ones known from
\cite{brosnan}. But the motivic cell structures give complete
descriptions of the stable homotopy types of the considered varieties,
and could in principle be used to compute representable
cohomology theories.  

\emph{Structure of the paper:} In \prettyref{sec:prelims}, we recall
the definition of motivic cell structures and collect a few
preliminary results. In \prettyref{sec:classical}, we produce cell
structures on smooth projective $\mathbb{G}_m$-varieties from the
Bia{\l}ynicki-Birula decomposition. In \prettyref{sec:gcell}, we
deduce cellularity of reductive groups and their classifying spaces. 
Finally, in \prettyref{sec:appl}, we discuss consequences of the
results.  

\section{Preliminaries on motivic cell structures}
\label{sec:prelims}

For the basics of $\Ao$-homotopy theory, we refer to
\cite{morel:voevodsky:1999:a1}. In the sequel, the base scheme will
be a field unless mentioned otherwise.
The $\Ao$-homotopy theory is constructed as the
$\Ao$-localization of the category of simplicial sheaves on the
category $\operatorname{Sm}_k$ of smooth schemes over  the field $k$
equipped with the Nisnevich topology. 

Since we are interested in cell decompositions of varieties,
we next recall the definition of motivic cell structures in the sense
of Dugger and Isaksen, cf. \cite[Definition 2.1]{dugger:isaksen}. 

\begin{definition}
\label{def:cell}
Let $\mathcal{M}$ be a pointed model category and $\mathcal{A}$ be a
set of objects in $\mathcal{M}$. The class of
\emph{$\mathcal{A}$-cellular   objects} is the smallest class of
objects of $\mathcal{M}$ such that  
\begin{enumerate}[(C1)]
\item every object of $\mathcal{A}$ is $\mathcal{A}$-cellular,
\item if $X$ is weakly equivalent to an $\mathcal{A}$-cellular object,
  then $X$ is $\mathcal{A}$-cellular, 
\item if $D:I\rightarrow\mathcal{M}$ is a diagram such that each $D_i$
  is $\mathcal{A}$-cellular, then $\hocolim D$ is
  $\mathcal{A}$-cellular. 
\end{enumerate}
\end{definition}

In the model category for $\Ao$-homotopy theory, we have
spheres 
\begin{displaymath}
S^{p,q}=S^{p-q}_s\wedge\mathbb{G}_m^{\wedge q},
\end{displaymath}
where $S^1_s$ is the simplicial circle. We will abbreviate 
$\{S^{p,q}\mid p,q\in\mathbb{N}\}$-cellular to cellular, as this is
the only notion of cellularity we will use. 

\subsection{Stable cell structures for varieties with cellular
  filtration}  

We show how cellular filtrations on smooth varieties yield homotopy
pushouts. The unusual twist in the proof is that one would usually use 
the closed subvarieties in the filtration to describe the cell
structure. However, to avoid singularity-related problems, it is
better to look at the complements of the subvarieties in the
filtration. A similar trick has also been used by Zibrowius in his work
on computation of Witt groups, cf. \cite{zibrowius}. Recall from
\cite[Definition 3.1]{brosnan} that an 
\emph{affine fibration} is a flat morphism $\phi:X\rightarrow Z$ such
that there exists a Zariski covering $U_i$ of the base $Z$ such that
$\phi|_{U_i}:U_i\times_Z X\cong U_i\times \mathbb{A}^n\rightarrow U_i$
is the projection away from the $\mathbb{A}^n$-factor. 

\begin{proposition}
\label{prop:general}
Let $X$ be a smooth scheme over an arbitrary base scheme $S$. 
Assume that there is a filtration of $X$ by closed subschemes 
\begin{displaymath}
X=X_n\supset X_{n-1}\supset\cdots\supset X_0\supset X_{-1}=\emptyset 
\end{displaymath}
Assume that for each $i$ there is a smooth scheme $Z_i$ and an affine
fibration $\phi_i:X_i\setminus X_{i-1}\rightarrow Z_i$. Then $X$ is
stably $\mathcal{A}$-cellular for 
$$
\mathcal{A}=\{S^{p,q}\wedge\operatorname{Th}(N_i)\mid
p,q\in\mathbb{Z}, 0\leq i< n\}\cup\{Z_n\},
$$
where $\operatorname{Th}(N_i)$ is defined via the homotopy cofibre
sequence 
$$
X\setminus X_i \rightarrow X\setminus X_{i-1}\rightarrow
\operatorname{Th}(N_i). 
$$

In particular, if $S$ is smooth over an excellent Dedekind ring and all
the $X_i\setminus X_{i-1}$ are disjoint unions 
of copies of affine spaces, then $X$ is stably cellular. 
\end{proposition}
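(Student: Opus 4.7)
The plan is to induct along the filtration of \emph{open} complements
\[
\emptyset = U_n \subset U_{n-1} \subset \cdots \subset U_0 \subset U_{-1} = X,
\]
where $U_i := X \setminus X_i$. Each $U_i$ is smooth as an open subscheme of the smooth $X$, and each stratum $X_i\setminus X_{i-1}$ is smooth since it is the total space of an affine fibration over the smooth $Z_i$. For $0 \leq i < n$ the open inclusion $U_i \hookrightarrow U_{i-1}$ has smooth locally closed complement $X_i \setminus X_{i-1}$, and Morel--Voevodsky purity identifies the cofibre $\operatorname{Th}(N_i)$ defined in the statement with the Thom space of the normal bundle of this stratum inside $U_{i-1}$.

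I would begin with the base case $U_{n-1}=X\setminus X_{n-1}$, which carries the affine fibration $\phi_n\colon U_{n-1}\to Z_n$. Since any Zariski-locally trivial $\mathbb{A}^m$-bundle is an $\Ao$-weak equivalence (the fibrewise projections are local equivalences, and one globalizes via Nisnevich descent), the map $\phi_n$ is itself an $\Ao$-equivalence, and since $Z_n\in\mathcal{A}$ we conclude that $U_{n-1}$ is $\mathcal{A}$-cellular. For the inductive step, suppose $U_i$ is stably $\mathcal{A}$-cellular. Since $\operatorname{Th}(N_i)\in\mathcal{A}$, the cofibre sequence $U_i\to U_{i-1}\to\operatorname{Th}(N_i)$ exhibits $U_{i-1}$ as a homotopy colimit built out of $\mathcal{A}$-cellular objects in the stable motivic category, where cofibre sequences satisfy two-out-of-three for any class closed under shifts and homotopy colimits. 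It is crucial here that $\mathcal{A}$ contains all $(p,q)$-shifts of each $\operatorname{Th}(N_i)$, so that the rotations of the cofibre sequence stay inside the cellular class. Iterating $n$ times produces $X=U_{-1}$.

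For the ``in particular'' clause, if every stratum is a disjoint union of affine spaces one may take each $Z_i$ to be a corresponding disjoint union of copies of $S$; in particular $Z_n$ is cellular. Over an excellent Dedekind ring, Lindel--Popescu guarantees that every vector bundle on $\mathbb{A}^m_S$ is extended from $S$, so combining with the $\Ao$-equivalence $\mathbb{A}^m_S\to S$ one finds that each Thom space $\operatorname{Th}(N_i)$ on an affine-space component reduces to a motivic sphere $S^{2r,r}$. Hence every element of $\mathcal{A}$ is cellular in the basic sense, and the first part of the proposition produces the desired cell structure on $X$.

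The principal obstacle, already flagged in the paper, is conceptual: one must filter $X$ by the smooth open complements $U_i$ rather than by the (possibly singular) closed subschemes $X_i$, so that purity applies and the cofibres become Thom spaces of genuine normal bundles. Once this switch of viewpoint is made, the remainder is the evident induction, using closure under homotopy colimits together with the standard facts that affine fibrations are $\Ao$-equivalences and that Morel--Voevodsky purity controls the cofibres along the filtration.
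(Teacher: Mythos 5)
Your argument is correct and follows the paper's proof essentially verbatim: filter by the smooth open complements $U_i = X\setminus X_i$, apply Morel--Voevodsky purity to get the cofibre sequences, and induct down from $U_{n-1}\simeq Z_n$ using the two-out-of-three property for stable cellularity in cofibre sequences (which the paper cites as Dugger--Isaksen Lemma 2.5 and you rederive from closure under shifts and homotopy colimits). Your invocation of Lindel--Popescu in the final clause simply makes explicit the mechanism behind the paper's terse remark that the hypothesis on $S$ forces the normal bundles over the affine strata to be trivial.
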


\begin{proof}
By assumption, $(X\setminus X_{i-1},X_i\setminus X_{i-1})$ is a smooth
pair, since $Z_i$ is assumed to be smooth, and $X_i\setminus X_{i-1}$
is an affine fibration. 
By the homotopy purity theorem of Morel and Voevodsky,
cf. \cite[Theorem 3.2.23]{morel:voevodsky:1999:a1}, we obtain a
homotopy cofibre sequence
$$
X\setminus X_i\rightarrow X\setminus
X_{i-1}\rightarrow\operatorname{Th}(N_i) .  
$$

Now we use \cite[Lemma 2.5]{dugger:isaksen}, which shows that if in a
cofibre sequence $A\rightarrow B\rightarrow C$ any two spaces are
stably cellular, then so is the third. Applied to the above cofibre 
sequences, we inductively conclude that $X$ is stably cellular. The
base case is $X\setminus X_{n-1}\simeq Z_n$. 

If $X_i\setminus X_{i-1}\cong \bigsqcup_{j}\mathbb{A}^{n_j}$ is a
finite disjoint union of copies of 
affine spaces, then because of the assumption, all normal bundles are
trivial and we find
$$
\operatorname{Th}(N_i)\simeq \bigvee_jS^{2n_j,n_j}.
$$
This settles the final assertion. 
\end{proof}

In particular, it is possible to see the attaching maps from a
cellular filtration, at least stably: the suspension of the attaching
map is $\operatorname{Th}(N_i)\rightarrow \Sigma (X\setminus X_i)$. 

\section{Cell structures from torus actions}
\label{sec:classical}

In this section, we use the Bia{\l}ynicki-Birula method to provide
cell structures for some smooth projective varieties. This method has
already been used to provide decompositions of motives,
cf. \cite{brosnan}. 

\subsection{The Bia{\l}ynicki-Birula decomposition}

The main geometric ingredient for the construction of cell
decompositions of projective homogeneous varieties is the
Bia{\l}ynicki-Birula decomposition. The following formulation can be
found in \cite[Theorem 3.2]{brosnan}. There it is attributed to
Bia{\l}ynicki-Birula, Iversen and Hesselink
\cite{bb1,bb2,hesselink,iversen},  for an explanation of the history,
cf. \cite{brosnan}.  

\begin{theorem}
\label{thm:bbdecomp}
Let $X$ be a smooth, projective variety over a field $k$ equipped with
an action of the multiplicative group $\mathbb{G}_m$. Then
\begin{enumerate}[(i)]
\item The fixed point locus $X^{\mathbb{G}_m}$ is a smooth, closed
  subscheme of $X$. 
\item There is a numbering $X^{\mathbb{G}_m}=\coprod_{i=1}^nZ_i$ of
  the connected components of the fixed point locus, a filtration 
\begin{displaymath}
X=X_n\supset X_{n-1}\supset\cdots\supset X_0\supset X_{-1}=\emptyset 
\end{displaymath}
and affine fibrations $\phi_i:X_i\setminus X_{i-1}\rightarrow Z_i$.  
\item The relative dimension $a_i$ of the affine fibration $\phi_i$ is
  the dimension of the positive eigenspace of the action of
  $\mathbb{G}_m$ on the tangent space of $X$ at an arbitrary point
  $z\in Z_i$. The dimension of $Z_i$ is the dimension of
  $TX_z^{\mathbb{G}_m}$. 
\end{enumerate}
\end{theorem}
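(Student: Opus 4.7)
The plan is to construct the decomposition from the limits $\lim_{t\to 0}t\cdot x$ of the $\mathbb{G}_m$-action and to establish the affine-bundle structure via $\mathbb{G}_m$-equivariant local linearization at fixed points; this follows the original strategy of \cite{bb1,bb2,hesselink,iversen}. For (i), I first reduce to a local statement using Sumihiro's theorem on equivariant completions: the smooth $\mathbb{G}_m$-variety $X$ is covered by $\mathbb{G}_m$-stable affine opens $U=\Spec A$, and the action corresponds to a $\mathbb{Z}$-grading $A=\bigoplus_{n\in\mathbb{Z}}A_n$, with the fixed subscheme cut out by the ideal $A\cdot \bigoplus_{n\neq 0}A_n$. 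Smoothness is then checked on tangent spaces: at a fixed point $z$ the cotangent module $\mathfrak{m}_z/\mathfrak{m}_z^2=\bigoplus_n V(n)$ decomposes equivariantly, the conormal module of $X^{\mathbb{G}_m}\subset X$ at $z$ is $\bigoplus_{n\neq 0}V(n)$, and so $X^{\mathbb{G}_m}$ is smooth at $z$ with tangent space the weight-zero part $V(0)^\vee$.

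For (ii), since $X$ is projective, for every $x\in X$ the orbit morphism $\mathbb{G}_m\to X$, $t\mapsto t\cdot x$, extends uniquely to a morphism $\mathbb{A}^1\to X$ by the valuative criterion of properness, which defines a limit morphism $p_+\colon X\to X^{\mathbb{G}_m}$, $x\mapsto \lim_{t\to 0}t\cdot x$. I set $X^+_i=p_+^{-1}(Z_i)$ and number the connected components $Z_i$ so that $\overline{X^+_i}\subseteq \bigsqcup_{j\leq i}X^+_j$; this partial ordering is produced by a semicontinuity argument comparing the dimensions of the positive-weight parts of the tangent representations across the boundary of each cell. Then $X_i:=\bigsqcup_{j\leq i}X^+_j$ is closed and gives the required filtration.

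The main obstacle, and the technical heart of the theorem, is showing that $\phi_i=p_+|_{X^+_i}$ is a Zariski-locally trivial affine fibration. The key input is a $\mathbb{G}_m$-equivariant linearization: for each $z\in Z_i$ one constructs an equivariant étale (and in fact, after further shrinking on a smooth projective variety, Zariski) morphism from a $\mathbb{G}_m$-stable neighborhood of $z$ in $X$ to the total space of the tangent representation $T_zX$, identifying $X^+_i$ locally with the positive-weight subspace $T_z(+)=\bigoplus_{n>0}T_z(n)$. Globalizing this identification along $Z_i$ exhibits $X^+_i$ as the total space of the positive-weight summand $N^+_{Z_i/X}$ of the normal bundle, hence as a rank-$a_i$ affine bundle over $Z_i$. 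Part (iii) is then immediate: the fibre dimension of $\phi_i$ at $z$ equals $\dim T_z(+)$, and $\dim Z_i=\dim T_z(X^{\mathbb{G}_m})=\dim T_z(0)$. Zariski-local triviality, as opposed to mere étale local triviality, is the delicate point; its proof is the genuine content of \cite{bb1,bb2,hesselink,iversen}, and I would simply invoke it here rather than reprove it.
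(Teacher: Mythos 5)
The paper does not prove this theorem at all: it is quoted verbatim from \cite[Theorem 3.2]{brosnan}, with the original sources \cite{bb1,bb2,hesselink,iversen} cited for the proof. So there is no in-paper argument to compare yours against, and your task reduces to giving an accurate sketch of the classical proof. Your outline is in the right spirit -- Sumihiro-type local linearity, the limit map via the valuative criterion of properness, equivariant linearization at fixed points -- and you correctly isolate Zariski-local triviality of the $\phi_i$ as the hard step best deferred to the references, which is in effect exactly what the paper does.

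Two of your intermediate steps are, however, too thin to count as a proof sketch. For (i), computing that the conormal space at a fixed point $z$ is $\bigoplus_{n\neq 0}V(n)$ and hence that $T_z X^{\mathbb{G}_m}$ is $V(0)^\vee$ does not by itself give smoothness of the fixed scheme; one must also show $X^{\mathbb{G}_m}$ has local dimension $\dim V(0)$ at $z$, and a tangent-space computation alone cannot see that. The standard closure of this gap is an infinitesimal-lifting (or slice) argument using linear reductivity of $\mathbb{G}_m$: a lift across a square-zero extension exists into $X$ by smoothness, the set of such lifts is a torsor under $T_zX\otimes J$, and projecting to the weight-zero part of that torsor produces a $\mathbb{G}_m$-fixed lift, so $X^{\mathbb{G}_m}$ is formally smooth. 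For (ii), the existence of a numbering making the partial unions $X_i=\bigsqcup_{j\leq i}X^+_j$ closed (the ``filterability'' of the decomposition) is a genuine theorem, not an immediate semicontinuity consequence of weight-space dimensions as you suggest; the usual argument equivariantly embeds $X$ into $\mathbb{P}(V)$ for a finite-dimensional $\mathbb{G}_m$-representation $V$, where the filtration is explicit by weight, and then restricts. With those two points repaired or explicitly cited alongside the Zariski-local-triviality claim, your sketch would be a faithful account of the argument the paper is invoking.
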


\subsection{Stable cell structures for $\mathbb{G}_m$-varieties}  
Now we use the Bia{\l}ynicki-Birula decomposition to obtain stable
cell structures for $\mathbb{G}_m$-varieties with isolated fixed
points. 

\begin{proposition}
\label{prop:bbcell}
Let $X$ be a smooth, projective variety over a field $k$. If there
exists an action of the multiplicative group $\mathbb{G}_m$ on $X$
which has isolated fixed points, then $X$ is stably cellular. 
\end{proposition}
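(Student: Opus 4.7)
The plan is to combine Theorem~\ref{thm:bbdecomp} directly with Proposition~\ref{prop:general}. First I would apply the Bia{\l}ynicki-Birula decomposition to $X$ to obtain the closed filtration $X = X_n \supset X_{n-1} \supset \cdots \supset X_{-1} = \emptyset$, smooth schemes $Z_i$ parametrising the connected components of $X^{\mathbb{G}_m}$, and affine fibrations $\phi_i : X_i \setminus X_{i-1} \to Z_i$ of relative dimension $a_i$. The shape of this data is exactly what Proposition~\ref{prop:general} is built to ingest.

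The hypothesis of isolated fixed points is then used to identify the $Z_i$. Since $X^{\mathbb{G}_m}$ is zero-dimensional and smooth over $k$ by part~(i) of Theorem~\ref{thm:bbdecomp}, each connected component $Z_i$ is étale over $k$; in the rational case (which is automatic if $k$ is algebraically closed, and is in any event the case of interest for the applications later in the paper) each $Z_i \cong \Spec k$ and so the affine fibration $\phi_i$ trivialises to give $X_i \setminus X_{i-1} \cong \mathbb{A}^{a_i}$. The successive strata of the filtration are then disjoint unions of affine spaces over $k$, and the ``in particular'' clause of Proposition~\ref{prop:general} applies to conclude that $X$ is stably cellular. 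Internally, this last step just unpacks homotopy purity to present each inclusion $X\setminus X_i \hookrightarrow X\setminus X_{i-1}$ as a cofibre sequence whose cofibre is the Thom space of a trivial bundle over a $k$-point, hence a motivic sphere, and then inducts down the filtration.

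The only subtlety, and the place where one has to be careful, is the possibility of non-rational fixed points, i.e.\ of a $Z_i$ of the form $\Spec L$ for a nontrivial finite separable extension $L/k$. In that case the stratum $X_i \setminus X_{i-1}$ is not literally a disjoint union of copies of $\mathbb{A}^n_k$, and the corresponding Thom space needs an extra argument to be recognised as stably cellular over $k$. For the statement at hand I would either interpret ``isolated'' in its strongest $k$-pointwise sense or supplement the argument with a brief descent step; the main engine of the proof -- the combination of the Bia{\l}ynicki-Birula filtration with the cofibre-sequence induction of Proposition~\ref{prop:general} -- is unaffected.
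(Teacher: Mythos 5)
Your proof follows the same route as the paper's: apply \prettyref{thm:bbdecomp} to produce the Bia{\l}ynicki-Birula filtration, observe that with isolated fixed points the successive strata $X_i\setminus X_{i-1}$ are affine spaces, and feed this into \prettyref{prop:general}. The subtlety you raise in your final paragraph, however, is a genuine one that the paper's own proof elides: it simply asserts that $X^{\mathbb{G}_m}$ is ``a union of finitely many $k$-points,'' which does not follow from the fixed locus being zero-dimensional and smooth.

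Your two suggested remedies are not on the same footing, and you should commit to the first. Reading ``isolated'' as ``finitely many $k$-rational'' is what the paper tacitly does and is the correct fix; a ``descent step'' cannot work, because the statement is actually false without a rationality hypothesis. Concretely, let $L/k$ be a separable quadratic extension and $X=\operatorname{Res}_{L/k}\mathbb{P}^1_L$ with the $\mathbb{G}_m$-action induced by $\mathbb{G}_{m,k}\hookrightarrow\operatorname{Res}_{L/k}\mathbb{G}_{m,L}$. Over $\bar{k}$ this is $\mathbb{P}^1\times\mathbb{P}^1$ with the diagonal action, which has four isolated fixed points, two of which are swapped by the Galois group, so that one component $Z_i$ of the fixed locus is $\Spec L$ rather than a $k$-point. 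The corresponding stratum is an $\mathbb{A}^1$-bundle over $\Spec L$, hence $\Ao$-equivalent to $\Spec L$, and one computes $M(X)\cong\mathbb{Z}\oplus M(\Spec L)(1)[2]\oplus\mathbb{Z}(2)[4]$; since the Artin motive $M(\Spec L)$ is not Tate, \prettyref{cor:motive} shows $X$ is not stably cellular over $k$. Thus the hypothesis of the proposition must be read (and arguably should be restated) as requiring the fixed points to be $k$-rational, as they are in all of the paper's applications to homogeneous spaces under split groups. With that reading your argument is correct and coincides with the paper's.
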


\begin{proof}
We use the Bia{\l}ynicki-Birula decomposition. In case the
$\mathbb{G}_m$-action has only isolated fixed points,  
the fixed locus $X^{\mathbb{G}_m}$ is a union of finitely many
$k$-points $Z_1,\dots,Z_n$. Therefore, there is a filtration 
\begin{displaymath}
X=X_n\supset X_{n-1}\supset\cdots\supset X_0\supset X_{-1}=\emptyset 
\end{displaymath}
and the $X_i\setminus X_{i-1}$ are isomorphic to
affine spaces $\mathbb{A}^{n_i}$. The schemes $X_i\setminus X_{i-1}$
are the sets of points $x\in X$ such that $\lim_{t\rightarrow 0}tx\in
Z_i$, where $x\mapsto tx$ denotes the $\mathbb{G}_m$-action. 
It is then obvious that the open subvariety $X\setminus
X_i\hookrightarrow X\setminus X_{i-1}$ has complement isomorphic to
$\mathbb{A}^{n_i}$. Now we apply \prettyref{prop:general} to this
filtration. 
\end{proof}

Now we will show that there is a great supply of varieties to which
the above result applies. 

\begin{definition}
\label{def:spherical}
Let $k$ be a field, and let $G$ be a connected split reductive
group. A normal $G$-variety $X$ is called \emph{spherical}, if some
Borel subgroup of $G$ has an open orbit on $X$. 
\end{definition}

Particular examples of spherical varieties are projective homogeneous
varieties, toric varieties, symmetric varieties and their wonderful
compactifications.

\begin{proposition}
\label{prop:spherical}
Let $G$ be a split reductive group with Borel subgroup $B$, and let
$X$ be a projective smooth spherical $G$-variety. Then there exists a
group  homomorphism $\mathbb{G}_m\rightarrow B$ such that the induced
$\mathbb{G}_m$-action on $X$ has isolated fixed points.  
\end{proposition}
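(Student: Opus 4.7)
My plan is to take $\lambda$ to be a cocharacter of a maximal torus $T\subseteq B$; since $T\subseteq B$, it automatically factors through $B$, and the proposition reduces to ensuring that the $\mathbb{G}_m$-fixed locus $X^\lambda$ is finite. I would split this into two independent ingredients: (a) $X^T$ itself is finite, and (b) a sufficiently generic cocharacter $\lambda\colon\mathbb{G}_m\to T$ satisfies $X^\lambda=X^T$.

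For ingredient (b), I would use the standard genericity trick. Since $X$ is smooth and projective, some tensor power of an ample line bundle is $G$-linearizable (Sumihiro), producing a $G$-equivariant closed embedding $X\hookrightarrow\mathbb{P}(V)$ for some finite-dimensional $G$-representation $V$. Decomposing $V$ into $T$-weight spaces gives $\mathbb{P}(V)^T=\coprod_\chi\mathbb{P}(V_\chi)$. If $\lambda$ is chosen in the complement of the finite union of hyperplanes $\langle\chi-\chi',\lambda\rangle=0$ taken over pairs of distinct $T$-weights $\chi\neq\chi'$ of $V$, then the induced $\mathbb{G}_m$-action separates all these weight spaces, so $\mathbb{P}(V)^\lambda=\mathbb{P}(V)^T$; intersecting with $X$ yields $X^\lambda=X^T$.

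The main obstacle is ingredient (a), and it is where the sphericity hypothesis is genuinely needed: the inclusion $X^T\subseteq X^\lambda$ is always present, so one cannot escape finiteness of $X^T$ by a clever choice of $\lambda$. I would invoke as known from the structure theory of spherical varieties the fact that a smooth complete spherical $G$-variety has only finitely many $T$-fixed points---this can be extracted, for instance, from the theorem of Brion and Vinberg that a spherical variety has finitely many $B$-orbits, combined with standard analysis of the $T$-fixed points inside each $B$-orbit. For the classes of examples singled out after \prettyref{def:spherical} the finiteness of $X^T$ is also visible by hand: the Bruhat decomposition gives $(G/P)^T=W/W_P$ for projective homogeneous spaces, a complete toric variety has $T$-fixed points indexed by the maximal cones of its fan, and wonderful compactifications of symmetric spaces admit an explicit description of the fixed locus via the associated little Weyl group data.
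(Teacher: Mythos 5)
Your overall architecture matches the paper's: both split the problem into (a) finiteness of $X^T$ and (b) finding a cocharacter $\lambda$ of $T$ with $X^\lambda = X^T$, and both take ingredient (a) as essentially known from the finiteness of $B$-orbits on a spherical variety. Where you genuinely diverge is in ingredient (b). The paper invokes Thomason's generic slice theorem: it produces a $T$-stable open affine $U\subseteq X$ on which the $T$-action looks like $T/T'\times U/T$ for a proper subtorus $T'$, records $T'$ as a ``forbidden'' subtorus, and then inducts on the $T$-variety $X\setminus U$ of smaller dimension to collect finitely many such forbidden subtori; any $\mathbb{G}_m\subseteq T$ avoiding all of them has the same fixed locus as $T$. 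You instead go through a $G$-linearized ample line bundle (Sumihiro) to get an equivariant embedding $X\hookrightarrow\mathbb{P}(V)$, decompose $V$ into $T$-weight spaces, and take $\lambda$ off the finitely many hyperplanes $\langle\chi-\chi',\lambda\rangle=0$, so that $\mathbb{P}(V)^\lambda=\mathbb{P}(V)^T$ and hence $X^\lambda=X^T$ by intersecting. Your route is the more classical one and arguably more transparent, reducing the genericity to elementary linear algebra on a single finite-dimensional representation rather than an induction through slices; the cost is the reliance on the existence of a $G$-linearizable ample line bundle, which is cleanest over algebraically closed (or at least nice) fields, whereas Thomason's theorem is formulated in a generality that sidesteps the linearization question. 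Both approaches are correct; you should just be explicit about the field hypotheses under which the Sumihiro/linearization step is being applied, since the proposition as stated allows arbitrary fields.
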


\begin{proof}
By definition, there are only finitely many $B$-orbits in
$X$. Therefore, the induced action of the maximal torus $T$ has 
finitely many fixed points. We show that there exists a morphism
$\mathbb{G}_m\rightarrow T$ which also has finitely many fixed
points, which basically follows from Thomason's generic slice theorem
\cite[Theorem 4.10]{thomason:1986}. Note that the base scheme $S$ is
the spectrum of a field, $X$ is smooth projective over $k$, so the
conditions are satisfied. The generic slice theorem then states that
there exists an open affine subscheme $U\subseteq X$ and a subtorus
$T'\hookrightarrow T$ such that $U\cong T/T'\times U/T$. Since $T$
acts with isolated fixed points, $T'$ is a strict subtorus -- it is
the generic stabilizer  for $U$. In particular, any subgroup
$\mathbb{G}_m\subseteq T$ with $\mathbb{G}_m\not\subseteq T'$ will act
without fixed points on $U$. Now the complement $X\setminus U$ is also
a $T$-variety, and it has smaller dimension than $X$. Inductively, we
find finitely many proper subtori $T_i\subseteq T$ such that any subgroup
$\mathbb{G}_m\subseteq T$ avoiding these subtori $T_i$ has the same
fixed set as $T$. This concludes the proof.
\end{proof}

\begin{corollary}
Let $G$ be a connected split reductive group over $k$, and let $X$ be
a smooth projective spherical $G$-variety. Then $X$ is stably cellular. 
\end{corollary}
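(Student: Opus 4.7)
The plan is simply to concatenate the two preceding results of this section. First I would invoke \prettyref{prop:spherical}: the hypotheses that $G$ is a connected split reductive group, that $X$ is smooth and projective, and that $X$ is spherical under $G$ are exactly what is needed to produce a cocharacter $\mathbb{G}_m \to B \subseteq G$ whose induced action on $X$ has only isolated fixed points. The content of this step is Thomason's generic slice theorem, applied inductively on the dimension of $X$ to avoid the finitely many proper subtori $T_i \subseteq T$ that arise as generic stabilizers.

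Second, with such a $\mathbb{G}_m$-action now chosen, I would apply \prettyref{prop:bbcell} directly. That proposition, whose proof uses the Bia{\l}ynicki-Birula filtration from \prettyref{thm:bbdecomp} together with the general homotopy-purity argument of \prettyref{prop:general}, concludes that any smooth projective $k$-variety carrying a $\mathbb{G}_m$-action with isolated fixed points is stably cellular. Composing the two propositions yields the claim.

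There is essentially no obstacle here beyond checking that the hypotheses line up; the corollary is a one-line consequence of the preceding two propositions. The only thing worth noting is that the existence of the $\mathbb{G}_m$-action with isolated fixed points is not part of the data of a spherical variety, so it is \emph{\prettyref{prop:spherical}} rather than the definition of spherical that does the work, and the proof should point this out explicitly.
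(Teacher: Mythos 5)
Your proposal is correct and matches the paper's proof exactly: invoke \prettyref{prop:spherical} to obtain a $\mathbb{G}_m$-action with isolated fixed points, then apply \prettyref{prop:bbcell} to conclude stable cellularity.
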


\begin{proof}
By \prettyref{prop:spherical}, there exists a $\mathbb{G}_m$-action with
isolated fixed points on $X$. 
By \prettyref{prop:bbcell}, $X$ is stably cellular.  
\end{proof}

This in particular applies to wonderful compactifications of
semisimple groups. 

\subsection{Unstable cell structures for projective homogeneous
  spaces} 

We have seen in the previous section that projective homogeneous
varieties under split reductive groups are stably cellular. Here, we
use slightly more precise information to obtain unstable cell
structures. Recall that one way to produce the stable cell structures
is the following result which provides a $\mathbb{G}_m$-action with
isolated fixed points, cf. \cite[Section 1]{kim:pandharipande}.

\begin{theorem}
\label{thm:isolated}
Let $G$ be a split semisimple group, let $T$ be a maximal torus, and
let $P$ be a standard parabolic subgroup. Then the following
assertions hold: 
\begin{enumerate}[(i)]
\item The $T$-action on $G/P$ has isolated fixed points. 
\item Let $\mathbb{G}_m\subseteq T$ correspond to an interior point of
  a Weyl-chamber. Then $(G/P)^{\mathbb{G}_m}=(G/P)^T$ and the
  Bia{\l}ynicki-Birula decomposition obtained from the
  $\mathbb{G}_m$-action is an affine stratification of $G/P$. 
\end{enumerate}
\end{theorem}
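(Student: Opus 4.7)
My plan is to prove (i) and (ii) by reducing both statements to the Bruhat decomposition and a root-space weight calculation at the $T$-fixed points.

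For (i), I would use the Bruhat decomposition $G = \bigsqcup_{w \in W/W_P} BwP$, which descends to a stratification $G/P = \bigsqcup_{w \in W/W_P} BwP/P$ into $B$-stable (hence $T$-stable) cells. Each cell is isomorphic, as a $T$-variety, to the affine space $U_w = U \cap wU^-w^{-1}$ acting by translation, where $U, U^-$ are the unipotent radicals of $B$ and the opposite Borel. This affine space carries a linear $T$-action with weights the positive roots $\alpha$ with $w^{-1}\alpha$ negative and not in the root system $\Phi_P$. All of these weights are nonzero, so $T$ fixes exactly one point of $BwP/P$, namely $wP/P$. Thus $(G/P)^T = \{wP/P : w \in W/W_P\}$, a finite set.

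For (ii), let $\lambda : \mathbb{G}_m \to T$ be the cocharacter corresponding to a chosen interior point of a Weyl chamber; the genericity assumption means $\langle \alpha, \lambda \rangle \neq 0$ for every root $\alpha$. Since $\lambda(\mathbb{G}_m) \subseteq T$ we have $(G/P)^T \subseteq (G/P)^{\mathbb{G}_m}$. For the reverse inclusion, each Bruhat cell $BwP/P$ is $T$-stable, hence $\mathbb{G}_m$-stable; on $BwP/P \cong U_w$ the action of $\lambda$ is linear with weights $\langle \alpha, \lambda \rangle$, and by genericity these are all nonzero, so the only $\mathbb{G}_m$-fixed point of each cell is $wP/P$. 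This gives $(G/P)^{\mathbb{G}_m} = (G/P)^T$.

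To identify the Bia{\l}ynicki-Birula cells with the Bruhat cells, I would (after possibly replacing $\lambda$ by its inverse) choose the sign convention so that $\lambda$ lies in the interior of the antidominant chamber, i.e., $\langle \alpha, \lambda \rangle > 0$ for every positive root $\alpha$ not appearing in the Levi of $P$ — equivalently, the weights of $\lambda$ on each $U_w$ are all strictly positive. Then on each Bruhat cell, the linear $\mathbb{G}_m$-action $\lambda(t)$ satisfies $\lim_{t \to 0} \lambda(t) \cdot x = wP/P$ for every $x \in BwP/P$. Since the Bruhat cells partition $G/P$, the attracting set of $wP/P$ must contain $BwP/P$, and by disjointness it must equal $BwP/P$. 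By the uniqueness of the Bia{\l}ynicki-Birula decomposition (Theorem \ref{thm:bbdecomp}), the BB stratification coincides with the Bruhat stratification, which is an affine stratification.

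The main obstacle is the sign/weight bookkeeping in the third step: one must carefully match the cocharacter $\lambda$ with the Weyl chamber so that \emph{all} weights on every $U_w$ become positive simultaneously. This is handled by the standard description of the tangent space $T_{wP/P}(G/P) = \bigoplus \mathfrak{g}_\alpha$ as a sum of root spaces indexed by the roots complementary to those of $w P w^{-1}$, together with the observation that for $\lambda$ regular the sign of $\langle \alpha, \lambda \rangle$ is determined by the chamber containing $\lambda$. Once the chamber is fixed correctly, the attracting-set calculation is essentially a one-parameter subgroup calculation on an affine space and contains no further subtlety.
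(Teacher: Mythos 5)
The paper does not prove this theorem: it is stated as a recollection and the proof is delegated to the citation \cite[Section 1]{kim:pandharipande}. Your argument therefore supplies details the paper leaves to the reference, and it is the standard argument via the Bruhat decomposition $G/P=\bigsqcup_{w\in W/W_P}BwP/P$ together with the weight computation on each cell. The core reasoning is sound: the $T$-weights on $BwP/P\cong U_w$ are nonzero roots, so only the origin $wP/P$ is $T$-fixed; a regular cocharacter $\lambda$ keeps all these weights nonzero, so $(G/P)^{\mathbb{G}_m}=(G/P)^T$; and choosing $\lambda$ so that all weights on every $U_w$ are strictly positive makes $\lim_{t\to 0}\lambda(t)\cdot x=wP/P$ for $x\in BwP/P$, whence the attracting cells coincide with the Bruhat cells because both families partition $G/P$. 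This is exactly what the paper's remark following the theorem ("coincides up to the Weyl group action with the Schubert cell stratification") is alluding to.

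Two small slips worth fixing, neither of which affects correctness. First, the $T$-action on $U_w\cong BwP/P$ is by conjugation, $u\mapsto tut^{-1}$, not by translation; it is $U_w$ itself that acts on the cell by translation. Second, the chamber you want, where $\langle\alpha,\lambda\rangle>0$ for all positive roots, is the \emph{dominant} chamber, not the antidominant one as written; your stated inequality is what makes the argument run, so only the name is off. (Choosing the antidominant chamber, or equivalently taking $\lim_{t\to\infty}$, would instead match the BB cells with the opposite Schubert cells $B^-wP/P$ -- this is the "up to Weyl group action" ambiguity the paper mentions.)
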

It is worth noting that the Bia{\l}ynicki-Birula stratification for
the above $\mathbb{G}_m$-action coincides (up to the Weyl group
action) with the Schubert cell stratification. 

We are going to use the Schubert cell stratification to provide the
unstable cells.  

\begin{proposition}
\label{prop:gpcell}
If $X=G/P$ is homogeneous under a split connected reductive group
$G$, then $X$ is unstably cellular. 
\end{proposition}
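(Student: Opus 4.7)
My plan is to apply homotopy purity to the Bia{\l}ynicki-Birula stratification from \prettyref{thm:isolated}, refined into an unstable homotopy pushout rather than merely a cofibre sequence, so that cellularity can be propagated one stratum at a time in the unpointed $\Ao$-homotopy category.

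To set up: choose a cocharacter $\mathbb{G}_m\subseteq T$ corresponding to an interior point of a Weyl chamber. By \prettyref{thm:isolated}(ii), this yields a filtration
\begin{displaymath}
X = X_n\supset X_{n-1}\supset\cdots\supset X_0\supset X_{-1}=\emptyset
\end{displaymath}
of $X=G/P$ whose strata $C_i:=X_i\setminus X_{i-1}$ are Schubert cells, each isomorphic to $\mathbb{A}^{a_i}$. Write $Y_i:=X\setminus X_{i-1}$; each $Y_i$ is smooth (open in $X$), and $C_i\hookrightarrow Y_i$ is a smooth closed subvariety with open complement $Y_{i+1}$. I induct downwards in $i$, the base case being $Y_n=C_n\cong\mathbb{A}^{a_n}$, which is $\Ao$-contractible and hence cellular.

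For the inductive step, assuming $Y_{i+1}$ is cellular, I would invoke the \emph{proof} of the Morel--Voevodsky purity theorem, rather than only the cofibre-sequence statement used in \prettyref{prop:general}, in order to present $Y_i$, up to $\Ao$-weak equivalence, as the homotopy pushout
\begin{displaymath}
\xymatrix{
N_i\setminus C_i \ar[r] \ar[d] & Y_{i+1} \ar[d] \\
N_i \ar[r] & Y_i,
}
\end{displaymath}
where $N_i$ is the normal bundle of $C_i$ in $Y_i$. Since $C_i\cong\mathbb{A}^{a_i}$ is $\Ao$-contractible, every vector bundle over it is $\Ao$-equivalent to a trivial bundle; hence $N_i$ is $\Ao$-contractible and $N_i\setminus C_i$ is $\Ao$-equivalent to $\mathbb{A}^{r_i}\setminus\{0\}\simeq S^{2r_i-1,\,r_i}$, a motivic sphere (with $r_i$ the rank of $N_i$, i.e.\ the codimension of $C_i$ in $X$). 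The three corners of the square other than $Y_i$ are then cellular (a contractible motivic disk, $Y_{i+1}$ by induction, and a motivic sphere), so (C3) of \prettyref{def:cell} produces cellularity of $Y_i$. After $n$ iterations, $Y_0=X$ is cellular.

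The main obstacle is precisely the refinement of purity from a cofibre sequence to the homotopy pushout above. Lemma 2.5 of \cite{dugger:isaksen} provides only a stable two-out-of-three property for cellularity, and unstably the cofibre $Y_i/Y_{i+1}\simeq S^{2r_i,r_i}$ does not by itself determine the $\Ao$-homotopy type of $Y_i$ from that of $Y_{i+1}$. What is needed is that the deformation space $D(Y_i,C_i)$, with its embeddings of $Y_i$ at $t=1$ and of $N_i$ at $t=0$, assembles into a zig-zag of Nisnevich distinguished squares all $\Ao$-weakly equivalent to the displayed one; this is essentially the content, rather than the statement, of Morel--Voevodsky's deformation-to-the-normal-cone construction. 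Once this refinement is in hand, the induction is formal.
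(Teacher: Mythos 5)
There is a genuine gap, and you have in fact put your finger on it yourself without realizing that it cannot be filled in the generality you want. In your proposed pushout square the maps $N_i\to Y_i$ and $N_i\setminus C_i\to Y_{i+1}$ simply do not exist: in algebraic geometry the normal bundle of a closed subscheme does not map to the ambient scheme (there is no tubular neighborhood theorem). What Morel--Voevodsky's deformation to the normal cone produces is a \emph{zig-zag} $Y_i\hookrightarrow D(Y_i,C_i)\hookleftarrow N_i$ together with weak equivalences on the \emph{cofibres} $Y_i/Y_{i+1}\simeq D/D^\circ\simeq\operatorname{Th}(N_i)$; this is precisely the Thom isomorphism and nothing more. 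It does not assemble $Y_i$ as a homotopy pushout of $Y_{i+1}\leftarrow N_i\setminus C_i\rightarrow N_i$, and I am not aware of any argument that does so for a general smooth closed pair. Indeed, if your refinement of purity were available, your induction would prove that \emph{every} smooth projective $\mathbb{G}_m$-variety with isolated fixed points is unstably cellular, whereas the paper carefully claims only stable cellularity in that generality (\prettyref{prop:bbcell}) and reserves the unstable statement for $G/P$.

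The paper's proof of \prettyref{prop:gpcell} avoids purity altogether and instead exploits a special geometric feature of $G/P$: each Schubert cell $X_i\setminus X_{i-1}\cong\mathbb{A}^{n_i}$ actually sits as a linear subspace inside a genuine Zariski-open neighborhood isomorphic to $\mathbb{A}^{\dim X}$, namely a Weyl translate $w^{-1}(X\setminus X_n)$ of the big cell chosen so that $w$ carries the $\mathbb{G}_m$-fixed point $x_i$ of the cell to the fixed point $z$ in the big cell. This produces the honest Zariski covering square
\begin{displaymath}
\xymatrix{
\mathbb{A}^{\dim X}\setminus\mathbb{A}^{n_i} \ar[r] \ar[d] & X\setminus X_i \ar[d] \\
\mathbb{A}^{\dim X}\ar[r] & X\setminus X_{i-1},
}
\end{displaymath}
which is a Nisnevich distinguished square and hence a homotopy pushout in the unstable $\Ao$-category, with the upper-left corner $\Ao$-equivalent to the sphere $\mathbb{A}^{\dim X-n_i}\setminus\{0\}$. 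In other words, the big cell plays the role of an \emph{algebraic} tubular neighborhood of each Schubert cell, and this is what replaces your appeal to the deformation space. Your inductive structure and cellularity bookkeeping are otherwise fine; the missing idea is this explicit covering, and it is where the hypothesis $X=G/P$ is genuinely used.
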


\begin{proof}
As in \prettyref{prop:bbcell}, there is a filtration 
\begin{displaymath}
X=X_n\supset X_{n-1}\supset\cdots\supset X_0\supset X_{-1}=\emptyset 
\end{displaymath}
and the $X_i\setminus X_{i-1}$ are isomorphic to
affine spaces $\mathbb{A}^{n_i}$. Via the identification with the
Schubert cell stratification, we can describe the cells as cosets
$BwP/P$ with $w$ running through the Weyl group $W$ of $G$. 

The inclusion $X\setminus
X_i\hookrightarrow X\setminus X_{i-1}$ has complement isomorphic to
$\mathbb{A}^{n_i}$ and we fix an isomorphism mapping
$0\in\mathbb{A}^{n_i}$ to the $\mathbb{G}_m$-fixed point $x_i$ of
$X_i$. 

There is a unique $\mathbb{G}_m$-fixed point $z\in X$ of maximal
index, this is the fixed point contained in $X\setminus X_n=Bw_0P/P$
with $w_0$ the longest element of the Weyl group. 

Now there exists $w\in W$ such that $wx_i=z$. This maps the
$\mathbb{A}^{n_i}$ isomorphically to an affine 
subspace of the big cell $X\setminus X_n$. It follows that 
$$
(w^{-1}(X\setminus X_n))\cap X\setminus X_i\cong \mathbb{A}^{\dim
  X}\setminus \mathbb{A}^{n_i}
$$

Then the following is a
Zariski covering of $X\setminus X_{i-1}$:
\begin{center}
  \begin{minipage}[c]{10cm}
    \xymatrix{
      \mathbb{A}^{\dim X}\setminus\mathbb{A}^{n_i} \ar[r] \ar[d] &
      X\setminus X_i \ar[d] \\
      \mathbb{A}^{\dim X}\ar[r] & X\setminus X_{i-1}.
    }
  \end{minipage}
\end{center}
But this implies that we have a homotopy cofibre sequence 
$$\mathbb{A}^{\dim X-n_i}\setminus\{0\}\rightarrow X\setminus
X_i\rightarrow X\setminus X_{i-1}
$$
These cofibre sequences provide the unstable cell decomposition of $X$.
\end{proof}

Note that this result holds over general base schemes as soon as the
split reductive group $G$ is defined over $\mathbb{Z}$. 

\subsection{Example: Even-dimensional Quadrics}

%The key issues became clear to me when I saw the worked-out example of
%the six-dimensional quadric in \cite[Section 3.1]{eklund:thesis}.
As an example how the above works, we discuss the case of the
$2n$-dimensional projective quadric  $Q_{2n}\subseteq
\mathbb{P}^{2n+1}$ defined by the vanishing of the split symmetric
bilinear form 
\begin{displaymath}
V\left(\sum_{i=0}^n x_iy_i\right)=Q_{2n}\subseteq\mathbb{P}^{2n+1}.
\end{displaymath}
This is a homogeneous space under the split group $PSO(2n+2)$. 
There are two important closed subvarieties, 
$$
Z_x=\{x_0=x_1=\cdots=x_n=0\}, \qquad
Z_y=\{y_0=y_1=\cdots=y_n=0\},
$$
which arise from the Bia{\l}ynicki-Birula decomposition associated to 
the $\mathbb{G}_m$-action 
$$
\mathbb{G}_m\times Q_{2n}\rightarrow Q_{2n}:
(t,[x_0:\dots:x_n:y_0:\cdots:y_n])\mapsto
[tx_0:\cdots:tx_n:y_0:\cdots:y_n]. 
$$

We have $Z_x\cong Z_y\cong\mathbb{P}^n$. The obvious projection 
$\pi:Q_{2n}\setminus Z_x\rightarrow Z_y$ onto the $x$-coordinates is a
rank $n$ vector bundle. Therefore, $Q_{2n}\setminus Z_x\simeq
\mathbb{P}^n$ for which we have the usual cell structure.

Now $Z_x\cong\mathbb{P}^n$ has a filtration by projective spaces
$\mathbb{P}^k$, $0\leq k\leq n$, which are given by the equations 
$$
x_{k+1}=x_{k+2}=\cdots=x_n=0.$$
We denote by $V_k$ the subvariety of $Q_{2n}$ given by $x_k\neq 0$,
which is isomorphic to $\mathbb{A}^{2n}$. 
The intersection $\mathbb{P}^k\cap V_k$ is then isomorphic to affine 
$k$-dimensional subspace $\mathbb{A}^k$. Thus we have a Zariski
covering 
\begin{center}
  \begin{minipage}[c]{10cm}
    \xymatrix{
      \mathbb{A}^{2n-k}\setminus\{0\}\simeq
      \mathbb{A}^{2n}\setminus\mathbb{A}^k\ar[r] \ar[d]&  
      Q_{2n}\setminus\mathbb{P}^{k} \ar[d] \\ 
      \mathbb{A}^{2n}\cong V_k \ar[r] & Q_{2n}\setminus\mathbb{P}^{k-1}.
    }
  \end{minipage}
\end{center}
These yield homotopy cofibre sequences
$$
\mathbb{A}^{2n-k}\setminus\{0\}\rightarrow
Q_{2n}\setminus\mathbb{P}^{k}\rightarrow
Q_{2n}\setminus\mathbb{P}^{k-1}
$$
which explain how to successively attach cells to $Q_{2n}\setminus
Z_x$ to finally obtain $Q_{2n}$. 
This recovers the classical cell decomposition known for the
projective even-dimensional quadrics. 

\subsection{Remarks on the general case}

Finally, we want to discuss the case of arbitrary smooth projective
$\mathbb{G}_m$-varieties. As formulated in \prettyref{thm:bbdecomp},
the fixed point locus $X^{\mathbb{G}_m}$ is a smooth closed
subscheme, and the successive strata $X_i\setminus X_{i-1}$ are vector
bundle torsors over components of $X^{\mathbb{G}_m}$. We can then
apply \prettyref{prop:general}  to see that only Thom spaces over
components of the fixed point locus $X^{\mathbb{G}_m}$ are needed in
order to reconstruct the stable homotopy type of $X$. 

In particular, we get the following refinement of the  motivic
decompositions for isotropic projective homogeneous varieties given by
Chernousov, Gille and Merkurjev \cite{chernousov:gille:merkurjev}
resp. by Brosnan \cite{brosnan}. 

\begin{proposition}
Let $k$ be a field, and let $X$ be an isotropic projective homogeneous
variety under a reductive group $G$. Then the Bia{\l}ynicki-Birula
filtration   
\begin{displaymath}
X=X_n\supset X_{n-1}\supset\cdots\supset X_0\supset X_{-1}=\emptyset 
\end{displaymath}
induces homotopy cofibre sequences 
$$
X\setminus X_i \rightarrow X\setminus X_{i-1}\rightarrow
\operatorname{Th}(Z)
$$
where $Z$ is a quasi-homogeneous projective variety under the
anisotropic kernel of $G$.
\end{proposition}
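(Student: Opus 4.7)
The plan is to apply \prettyref{prop:general} to a suitably chosen $\mathbb{G}_m$-action on $X=G/P$, and then to identify the resulting fixed-point components via the structure theory of isotropic reductive groups. Since $G$ is isotropic, fix a maximal split torus $S\subseteq G$. Arguing exactly as in the proof of \prettyref{prop:spherical} by inductive use of Thomason's generic slice theorem, one can choose a cocharacter $\lambda\colon\mathbb{G}_m\to S$ generic enough that $X^{\mathbb{G}_m}=X^S$; that is, $\lambda$ avoids the finitely many proper subtori of $S$ that arise from the induction on dimension.

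With this $\mathbb{G}_m$-action in place, \prettyref{thm:bbdecomp} supplies the filtration $X=X_n\supset X_{n-1}\supset\cdots\supset X_{-1}=\emptyset$ together with affine fibrations $\phi_i\colon X_i\setminus X_{i-1}\to Z_i$ onto the connected components $Z_i$ of $X^{\mathbb{G}_m}=X^S$. As in the proof of \prettyref{prop:general}, the pair $(X\setminus X_{i-1},X_i\setminus X_{i-1})$ is a smooth closed pair, so the Morel--Voevodsky homotopy purity theorem yields the homotopy cofibre sequence
\[
X\setminus X_i\longrightarrow X\setminus X_{i-1}\longrightarrow \operatorname{Th}(N_i),
\]
where $N_i$ is the normal bundle of $X_i\setminus X_{i-1}$ in $X\setminus X_{i-1}$. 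Because $\phi_i$ is an $\mathbb{A}^1$-weak equivalence, this Thom space is $\mathbb{A}^1$-equivalent to the Thom space of a vector bundle over $Z_i$, so the remaining task is to identify $Z_i$ with a quasi-homogeneous variety under the anisotropic kernel $G^{\mathrm{an}}$.

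For this, the classical description of the $S$-fixed scheme identifies $(G/P)^S$ as a finite disjoint union of orbits of the form $Z_G(S)\cdot wP/P$ as $w$ ranges over suitable Weyl group representatives, and each such orbit is a projective homogeneous variety under the centralizer $Z_G(S)$. Since $S$ acts trivially on every component $Z_i$, the action factors through the quotient $Z_G(S)/S$, which is (up to central isogeny) the anisotropic kernel $G^{\mathrm{an}}$ of $G$; hence $Z_i$ is quasi-homogeneous under $G^{\mathrm{an}}$, completing the argument. The main subtlety I expect is in this last step: over a general field $Z_G(S)$ need only be an almost-direct product of $S$ with $G^{\mathrm{an}}$, so pushing a transitive $Z_G(S)$-action down to $G^{\mathrm{an}}$ may only produce an open orbit rather than a transitive one. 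This is precisely why the statement demands \emph{quasi-homogeneous} rather than homogeneous, and the weaker property is preserved under isogeny, so the obstacle dissolves once one is content with the asserted conclusion.
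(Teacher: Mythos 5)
Your proposal is correct and follows essentially the same route as the paper, whose proof is simply a citation to Brosnan's geometric analysis of the Bia{\l}ynicki-Birula filtration for isotropic homogeneous spaces together with the argument of \prettyref{prop:bbcell}; you have unpacked exactly what that citation entails (generic cocharacter in a maximal split torus via Thomason's slice theorem, homotopy purity applied to the resulting filtration, identification of $X^S$ as a union of $Z_G(S)$-orbits, and the passage from $Z_G(S)$ to the anisotropic kernel up to isogeny). Your acknowledged caveat at the end — that killing the central split torus only gives an open $G^{\mathrm{an}}$-orbit rather than transitivity — is precisely why the statement says \emph{quasi}-homogeneous, and this matches Brosnan's treatment.
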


\begin{proof}
This follows from the geometric analysis of the Bia{\l}ynicki-Birula
filtration in \cite[Section 4]{brosnan} and the argument in
\prettyref{prop:bbcell}. 
\end{proof}

This homotopy colimit description of isotropic projective homogeneous
varieties in the stable homotopy category refines the motivic
decomposition known from \cite{chernousov:gille:merkurjev} and
\cite{brosnan}. We see that to describe the stable homotopy types of
isotropic projective homogeneous spaces, we need stable homotopy types
of Thom spaces over anisotropic projective homogeneous varieties. It
is not clear to me if the normal bundles of anisotropic varieties are
trivial, i.e. if there are $\Ao$-local weak equivalences
$\operatorname{Th}(Z)\simeq S^{2n,n}\wedge Z_+$.

\section{Cellularity for groups and classifying spaces}
\label{sec:gcell}

The cell structure for projective homogeneous varieties can be lifted
to show cellularity for split reductive groups. The following was also
proved in \cite{morel:fmconj}.

\begin{proposition}
\label{prop:gcell}
Let $G$ be a connected split reductive group. Then $G$ is
stably cellular. 
\end{proposition}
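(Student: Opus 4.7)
The plan is to produce a cellular filtration of $G$ from the Bruhat decomposition and invoke \prettyref{prop:general}. Fix a Borel subgroup $B=TU$ of $G$ with split maximal torus $T$ and unipotent radical $U$, and let $W$ denote the Weyl group. Since the Bruhat closure order refines the length order on $W$, setting $X_i=\bigsqcup_{\ell(w)\leq i}BwB$ produces a filtration of $G$ by closed subvarieties
\begin{displaymath}
G=X_n\supset X_{n-1}\supset\cdots\supset X_0\supset X_{-1}=\emptyset.
\end{displaymath}

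Each stratum $X_i\setminus X_{i-1}$ is the disjoint union of those Bruhat cells $BwB$ with $\ell(w)=i$. Standard theory identifies $BwB\cong\mathbb{A}^{\ell(w)}\times B$ as a variety, and the semidirect product $B=T\ltimes U$ together with the fact that $U$ is split unipotent yields $B\cong T\times\mathbb{A}^{\dim U}$ as a variety. Hence each $BwB\cong\mathbb{A}^{\ell(w)+\dim U}\times T$, which we regard as a trivial affine fibration over $T$ of relative dimension $\ell(w)+\dim U$. Taking $Z_i$ to be the disjoint union of the corresponding copies of $T$, all hypotheses of \prettyref{prop:general} are satisfied: the strata are smooth, the ambient open subvarieties $G\setminus X_{i-1}$ are smooth as opens in $G$, and the fibrations are affine. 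We conclude that $G$ is stably $\mathcal{A}$-cellular for $\mathcal{A}=\{T\}\cup\{S^{p,q}\wedge\operatorname{Th}(N_i)\mid p,q\in\mathbb{Z},\,0\leq i<n\}$.

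It remains to verify that $T$ and each $\operatorname{Th}(N_i)$ is stably cellular. The split torus $T\cong\mathbb{G}_m^r$ satisfies $T_+\simeq(\mathbb{G}_m)_+^{\wedge r}$, and each $(\mathbb{G}_m)_+$ splits stably as $S^{0,0}\vee S^{1,1}$, so $T_+$ is stably a finite wedge of motivic spheres. For the Thom spaces, we invoke that every vector bundle on a split torus is trivial (projective modules over $k[t_1^{\pm 1},\ldots,t_r^{\pm 1}]$ are free); combined with $\Ao$-invariance, this implies that vector bundles on $X_i\setminus X_{i-1}\cong\mathbb{A}^{\ell(w)+\dim U}\times T$ are stably trivial. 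Consequently each $\operatorname{Th}(N_i)$ is stably equivalent to a wedge of terms of the form $S^{2m,m}\wedge T_+$, which is cellular, and so $G$ is stably cellular.

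The main technical input is the triviality of vector bundles on a split torus, which, while classical, is external to the $\Ao$-homotopy framework. One could alternatively exploit the $T$-equivariance of the normal bundles in the Bruhat stratification: as $T$-equivariant bundles over $T$-varieties they decompose into sums of character line bundles, and showing that the associated Thom spaces are stably cellular can be handled by a direct analysis of $T$-weight data in the Lie algebra $\mathfrak{g}$, avoiding any appeal to Laurent polynomial Quillen--Suslin.
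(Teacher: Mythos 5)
Your proof is correct, but it takes a genuinely different route from the paper's. You filter $G$ directly by the closed unions of Bruhat cells of bounded length, view each stratum $BwB\cong\mathbb{A}^{\ell(w)+\dim U}\times T$ as a trivial affine fibration over a copy of $T$, and feed this into \prettyref{prop:general}; the residual work is then to show that the Thom spaces $\operatorname{Th}(N_i)$ and the scheme $Z_n\cong T$ are stably cellular, for which you invoke the splitting $(\mathbb{G}_m)_+\simeq S^{0,0}\vee S^{1,1}$ and (stable) triviality of vector bundles on $\mathbb{A}^m\times\mathbb{G}_m^r$. The paper instead does not appeal to \prettyref{prop:general} at all: it pulls back the cellular filtration and the Zariski excision squares of $G/B$ from \prettyref{prop:gpcell} along $\pi:G\to G/B$, obtaining homotopy pushout squares whose corners are of the form (open subset of $\mathbb{A}^{\dim X}$)$\times B$, and it only needs triviality of $B$-bundles over $\mathbb{A}^n$ and $\mathbb{A}^n\setminus\mathbb{A}^m$, which reduces via the filtration $B=T\ltimes U$ to Quillen--Suslin on affine space. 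Concretely, the trade-off is: your argument is more self-contained within $G$ and doesn't rely on having first proved the unstable cellularity of $G/B$, but it imports the nontrivial fact that vector bundles on split tori are (stably) trivial (Swan's theorem, or the computation $K_0(\mathbb{G}_m^r)=\mathbb{Z}$ combined with homotopy invariance of $K_0$ for regular schemes), whereas the paper needs only bundle-triviality over affine space and reuses the machinery already built for the flag variety. Your alternative suggestion at the end — using $T$-equivariance of the normal bundle to split it into characters and analyze weights in $\mathfrak{g}$ — would also work and is closer in spirit to Bia{\l}ynicki--Birula. One small wording caveat: be explicit that stable triviality suffices because $\operatorname{Th}(E\oplus\mathcal{O})\simeq\Sigma^{2,1}\operatorname{Th}(E)$, so in $\mathcal{SH}(k)$ a stably trivial normal bundle has the same Thom space as a trivial one of the same rank; you do effectively use this when passing from ``stably trivial'' to ``$\operatorname{Th}(N_i)$ is a wedge of $S^{2m,m}\wedge T_+$.''
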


\begin{proof}
Let $B$ be a Borel subgroup of $G$. Then there is a smooth morphism
$\pi:G\rightarrow G/B$. Now let 
\begin{displaymath}
G/B=X_n\supset X_{n-1}\supset\cdots\supset X_0\supset X_{-1}=\emptyset 
\end{displaymath}
be the filtration yielding the cell structure of $G/B$ from
\prettyref{prop:gpcell}. We obtain a filtration 
\begin{displaymath}
G=Y_n\supset Y_{n-1}\supset\cdots\supset Y_0\supset Y_{-1}=\emptyset 
\end{displaymath}
by setting $Y_i=\pi^{-1}(X_i)$. As in the proof of
\prettyref{prop:gpcell}, we obtain a Zariski covering 
\begin{center}
  \begin{minipage}[c]{10cm}
    \xymatrix{
      \pi^{-1}(\mathbb{A}^{\dim X}\setminus \mathbb{A}^{n_i})\cong
      \mathbb{A}^{\dim X}\setminus \mathbb{A}^{n_i}\times B\ar[r]
      \ar[d] &       Y\setminus Y_i \ar[d] \\
      \pi^{-1}(\mathbb{A}^{\dim X}) \cong \mathbb{A}^{\dim X}\times
      B\ar[r] & Y\setminus Y_{i-1}.  
    }
  \end{minipage}
\end{center}
The spaces $\pi^{-1}(\mathbb{A}^{\dim X}\setminus \mathbb{A}^{n_i})$
and $\pi^{-1}(\mathbb{A}^{\dim X})$ are $B$-bundles over
$\mathbb{A}^{\dim X}\setminus \mathbb{A}^{n_i}$ and $\mathbb{A}^{\dim
  X}$ respectively. The $B$-bundle $\pi^{-1}(\mathbb{A}^{\dim X})$ can
be factored as composition of a $T$-bundle followed by a vector
bundle. Therefore, $\pi^{-1}(\mathbb{A}^{\dim X})\rightarrow
\mathbb{A}^{\dim X}$ is a trivial $B$-bundle. The bundle
$\pi^{-1}(\mathbb{A}^{\dim X}\setminus \mathbb{A}^{n_i}) \rightarrow
\mathbb{A}^{\dim X}\setminus \mathbb{A}^{n_i}$ is the restriction of
the former bundle along the inclusion $\mathbb{A}^{\dim X}\setminus
\mathbb{A}^{n_i}\hookrightarrow \mathbb{A}^{\dim X}$, hence also
trivial. This justifies the isomorphisms in the above diagram. 

Stable cellularity now follows by induction, and the fact that stable
cellularity is preserved under products, cf. \cite[Lemma
3.4]{dugger:isaksen}. 
\end{proof}

This approach can be used to reproduce the motivic decompositions of
split reductive groups obtained by Biglari \cite{biglari:diss}. 

From the cellularity of the group $G$ we can deduce the cellularity of
the classifying space $BG$. Here we consider the classifying space of
Nisnevich locally trivial torsors. 

\begin{proposition}
Let $G$ be a connected split reductive group over an infinite field
$k$. Then $BG$ is stably cellular. 
\end{proposition}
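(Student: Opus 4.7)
The plan is to realise $BG$ as a homotopy colimit of objects already known to be stably cellular, and then to invoke the closure properties from \prettyref{def:cell}.

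First, I would use the geometric bar construction, i.e. the simplicial scheme $B_\bullet G$ with $B_nG = G^{\times n}$ and the usual face and degeneracy maps. Its homotopy colimit over $\Delta^{\mathrm{op}}$ in the $\Ao$-model category is a standard model for the classifying space of Nisnevich-locally trivial $G$-torsors, by work of Morel and Voevodsky \cite{morel:voevodsky:1999:a1}. The hypothesis that $k$ be infinite enters here, ensuring that for a split reductive $G$ this geometric realisation actually represents the Nisnevich classifying space $BG$ in the $\Ao$-homotopy category.

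Second, by \prettyref{prop:gcell} the group $G$ is stably cellular, and stable cellularity is preserved under finite products by \cite[Lemma 3.4]{dugger:isaksen}, so each simplicial level $G^{\times n}$ is stably cellular. Applying axiom~(C3) of \prettyref{def:cell} to the simplicial diagram $[n]\mapsto G^{\times n}$ yields that $BG \simeq \hocolim_{\Delta^{\mathrm{op}}} B_\bullet G$ is stably cellular.

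The main obstacle is really the identification of $BG$ with the geometric bar construction in the $\Ao$-homotopy category, since this is where the infinite-field hypothesis is genuinely used and where one must be careful about the chosen model of the classifying space. An alternative route, avoiding this identification, would be to use a Totaro-style approximation $BG \simeq \hocolim_n U_n/G$, where $U_n$ is an open subscheme of a representation $V_n$ of $G$ on which $G$ acts freely and the codimension of $V_n\setminus U_n$ tends to infinity; one would then establish stable cellularity of each quotient $U_n/G$ directly, for instance by pulling back a Bruhat-type filtration through the $G$-torsor $U_n \to U_n/G$ and arguing along the lines of \prettyref{prop:gpcell} and \prettyref{prop:gcell}, combining cellularity of $G$ with the cellular nature of the affine pieces in the base.
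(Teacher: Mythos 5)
Your bar-construction argument is correct and is a genuinely different — and in fact shorter — route than the one taken in the paper. The paper instead runs the Dror Farjoun/Ganea machine: it builds a tower of fibrations $F_i\to E_i\to B\Sing G$ with $E_{i+1}=E_i\cup CF_i$ and $F_{i+1}\simeq\Sigma(F_i\wedge\Sing G)$, shows by Morel's $\Ao$-connectivity theorem that the $F_i$ become highly connected, and then identifies $B\Sing G$ with $\hocolim E_i$. Both approaches ultimately reduce to cellularity of $G$ from \prettyref{prop:gcell} together with closure of stable cellularity under products and homotopy colimits; your version trades the work of verifying that $G\to EG\to BG$ is an $\Ao$-local fibre sequence (and the Ganea/connectivity bookkeeping) for the standard identification of the Nisnevich classifying space with $\hocolim_{\Delta^{\mathrm{op}}}B_\bullet G$, and since $\Sigma^\infty(-)_+$ is a left adjoint this identification passes straight to the stable category. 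The paper's route does buy a little extra: it produces an explicit filtration of $BG$ with cellular subquotients and tracks connectivity, which is more information than the bare cellularity statement. Your alternative sketch via Totaro's finite-dimensional approximations $U_n/G$ would also work but requires separately establishing cellularity of each $U_n/G$, which is more geometric input than either proof above.

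One inaccuracy worth flagging: you locate the role of the infinite-field hypothesis in the identification of $BG$ with the bar construction, but that identification is a purely simplicial fact and holds over any base. In the paper the hypothesis is needed for Morel's $\Ao$-connectivity theorem and for the $\Ao$-local fibre-sequence result from \cite{torsors}, neither of which your argument invokes. As written, your proof does not actually appear to use that $k$ is infinite, which is worth saying explicitly rather than attributing the hypothesis to a step where it is not needed.
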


\begin{proof}
We use the proof of \cite[Theorem 2.D.11]{farjoun:1996:cellular}. We
start with the fibre sequence $\Sing G\rightarrow E\Sing G\rightarrow
B\Sing G$. This is an $\Ao$-local fibre sequence, cf. \cite[Theorem
4.7]{torsors}. 

We define a sequence of fibrations $F_i\rightarrow E_i\rightarrow
B\Sing G$ by setting
$$
E_0=E\Sing G, F_0=\Sing G, E_{i+1}=E_i\cup CF_i,
$$
and $F_{i+1}$ is the homotopy fibre of the obvious morphism
$E_{i+1}\rightarrow B\Sing G$ in the $\Ao$-local category. By Ganea's
theorem \cite[Proposition 2.22]{classify}, we have in the simplicial
model structure 
$$
F_{i+1}\simeq F_i\ast \Sing G\simeq \Sigma (F_i\wedge \Sing G).
$$
Inductively, we conclude that the simplicial connectivity of $F_{i+1}$
is at least $i$. By Morel's $\Ao$-connectivity theorem \cite[Theorem
15]{morel:2006:a1algtop}, the space $F_{i+1}$ is $i$-$\Ao$-connected. 
Since $B\Sing G$ is $\Ao$-local, the homotopy fibre of
$E_{i+1}\rightarrow B\Sing G$ is $\Ao$-weakly equivalent to $F_{i+1}$,
this is a consequence of \cite[Theorem 1]{flocal}. 

From the assumption and \prettyref{prop:gcell}, it follows that
$\Sing G$ is stably cellular, and inductively, we conclude that $E_i$ and
$F_i$ are stably cellular. This follows since $E_{i+1}$ is constructed
as homotopy cofibre of $F_i\rightarrow E_i$, and as seen above,
$F_{i+1}$ is the suspension of a smash-product of $F_i$ and $\Sing
G$. 

We denote by $F_\infty\rightarrow E_\infty\rightarrow B\Sing G$ the
colimit of the fibre sequences above. This is a simplicial fibre
sequence, by homotopy distributivity, cf. \cite[Proposition
2.17]{classify}. Since $B\Sing G$ is $\Ao$-local, the homotopy fibre
of $E_\infty\rightarrow B\Sing G$ is $\Ao$-weakly equivalent to
$F_\infty$. But as noted above, $F_\infty$ is simplicially
contractible. Therefore, $B\Sing G\simeq \operatorname{hocolim} E_i$,
which implies stable cellularity of $B\Sing G$. 
\end{proof}

\begin{remark}
Alternatively, the construction of the classifying space shows
directly that $BG$ is $G$-cellular in the simplicial model
structure. This implies that $BG$ is also $G$-cellular in the
$\Ao$-local model structure. But \prettyref{prop:gcell} implies that
$\Ao$-locally, $G$ is cellular, so $BG$ is cellular.
\end{remark}

\section{Consequences} 
\label{sec:appl}

\subsection{Motivic Decompositions}

As a consequence of the motivic cell structures developed, we obtain
stable homotopy theory proofs of the motivic decompositions in
\cite{brosnan}. In this section, motives are objects in Voevodsky's
category $DM(k)$. The functor $\operatorname{Sm}_k\rightarrow DM_k$
extends to a functor $\mathcal{SH}(k)\rightarrow DM(k)$, so in fact we
can consider motives associated to arbitrary stable homotopy types. 

\begin{corollary}
\label{cor:motive}
Let $X$ be a smooth projective variety over a field $k$. Assume there
is a $\mathbb{G}_m$-action on $X$ with isolated fixed points. Then the
motive of $X$ splits as a direct sum of Tate motives:
$$
M(X)\cong \bigoplus_i \mathbb{Z}(n_i)[2n_i].
$$
\end{corollary}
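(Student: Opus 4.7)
The plan is to transport the iterated cofibre sequences underlying Proposition \ref{prop:bbcell} through the functor $\mathcal{SH}(k) \to DM(k)$ and show that the resulting distinguished triangles in $DM(k)$ split.

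Concretely, the Bia{\l}ynicki-Birula filtration $X = X_n \supset \cdots \supset X_{-1} = \emptyset$ together with homotopy purity produces cofibre sequences
$$X \setminus X_i \to X \setminus X_{i-1} \to \operatorname{Th}(N_i),$$
and since the fixed points are isolated, each $N_i$ is a trivial bundle of rank $n_i$ over a $k$-point, so $\operatorname{Th}(N_i) \simeq S^{2n_i, n_i}$. Applying $\mathcal{SH}(k) \to DM(k)$ and using the identification $S^{p,q} \mapsto \mathbb{Z}(q)[p]$, one obtains distinguished triangles
$$M(X \setminus X_i) \to M(X \setminus X_{i-1}) \to \mathbb{Z}(n_i)[2n_i] \xrightarrow{\partial_i} M(X \setminus X_i)[1].$$

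I would then argue by descending induction on $i$, with base case $M(X \setminus X_{n-1}) = M(\mathbb{A}^{n_n}) \cong \mathbb{Z}$, that each $M(X \setminus X_{i-1})$ is a direct sum of Tate motives of the form $\mathbb{Z}(m)[2m]$. The inductive step requires $\partial_i = 0$: assuming $M(X \setminus X_i) = \bigoplus_j \mathbb{Z}(m_j)[2m_j]$, additivity reduces the claim to vanishing of
$$\operatorname{Hom}_{DM(k)}\bigl(\mathbb{Z}(n_i)[2n_i],\, \mathbb{Z}(m_j)[2m_j+1]\bigr) = H^{2(m_j - n_i)+1,\, m_j - n_i}(\Spec k, \mathbb{Z})$$
for each $j$. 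This group vanishes: the weight is negative when $m_j < n_i$, and otherwise the cohomological degree $2(m_j - n_i) + 1$ strictly exceeds the weight $m_j - n_i$, placing it outside the non-vanishing range for motivic cohomology of a field. Since a vanishing connecting morphism in a distinguished triangle forces a splitting, the induction carries through.

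The main obstacle is precisely the vanishing of the connecting maps $\partial_i$; once it is in hand, the corollary follows formally, and the indices $n_i$ in the decomposition are read off as the dimensions of the Bia{\l}ynicki-Birula cells, equivalently as the ranks of the positive eigenspaces of $\mathbb{G}_m$ at the fixed points by \prettyref{thm:bbdecomp}(iii). The vanishing itself is a standard consequence of Voevodsky's computation of the motivic cohomology of a field, and is the only non-formal ingredient required beyond the stable cell structure already constructed.
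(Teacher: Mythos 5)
Your proof is correct and follows the same overall strategy as the paper: transport the Bia{\l}ynicki--Birula cofibre sequences from $\mathcal{SH}(k)$ to $DM(k)$, argue by induction along the filtration, and show each connecting morphism vanishes so the distinguished triangles split. Where you diverge, and in fact simplify, is in the vanishing step. The paper invokes the monotonicity $n_i > n_j$ for $i > j$ of the Bia{\l}ynicki--Birula cell dimensions to arrange that the boundary map is a composite of morphisms $\mathbb{Z}(m)[2m-1]\to\mathbb{Z}(n)[2n]$ with $m\geq n$, and then observes this vanishes. You observe, correctly, that $\operatorname{Hom}_{DM(k)}(\mathbb{Z}(a)[2a],\mathbb{Z}(b)[2b+1])=H^{2(b-a)+1,\,b-a}(\Spec k,\mathbb{Z})$ is zero for \emph{all} integers $a,b$ -- negative weight when $b<a$, cohomological degree exceeding the weight when $b\geq a$ -- so no bookkeeping on the filtration indices is required. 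This is a genuine streamlining of the argument.

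One small notational wrinkle: you write $\operatorname{Th}(N_i)\simeq S^{2n_i,n_i}$, where $N_i$ is the normal bundle of the stratum $X_i\setminus X_{i-1}$ inside $X\setminus X_{i-1}$; its rank is the \emph{codimension} $\dim X - \dim(X_i\setminus X_{i-1})$, i.e.\ the dimension of the negative eigenspace of $\mathbb{G}_m$ at the fixed point, not the dimension of the cell (the positive eigenspace) as you state at the end. Since replacing the $\mathbb{G}_m$-action by its inverse swaps the two eigenspaces and the motivic decomposition of a smooth projective variety is independent of the choice of action, the multiset of Tate twists one obtains is the same either way, so your final description of the $n_i$ is still correct as a set of weights; but as written the identification with the ranks of the positive eigenspaces is not what the proof directly produces. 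This does not affect the validity of the argument, since your vanishing statement is uniform in the indices.
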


\begin{proof}
From \prettyref{prop:bbcell}, we obtained homotopy cofibre sequences 
$$
X\setminus X_i\rightarrow X\setminus X_{i-1}\rightarrow
S^{2(\dim X-n_i),\dim
  X-n_i} 
$$
describing the stable $\Ao$-homotopy type of $X$. These induce
distinguished triangles in $\mathcal{SH}(k)$, and therefore also
distinguished triangles in $DM(k)$. Now we prove the result
inductively for the motives $M(X\setminus X_i)$. For convenience, we
rewrite the above distinguished triangles as 
$$
M(S^{2(\dim X-n_i)-1,\dim X-n_i})\rightarrow M(X\setminus X_i)\rightarrow
M(X\setminus X_{i-1}).
$$
By inductive assumption $M(X\setminus X_i)$ is a direct sum of Tate
motives of the form $\mathbb{Z}(n_j)[2n_j]$. It then suffices to show
that the morphism 
$$
M(S^{2(\dim X-n_i)-1,\dim X-n_i})\rightarrow M(X\setminus X_i)
$$
is trivial, hence induces a splitting 
$$
M(X\setminus X_{i-1})\cong M(S^{2(\dim X-n_i),\dim X-n_i})\oplus
M(X\setminus X_i).
$$

Recall that in the Bia{\l}ynicki-Birula filtration, the dimension of
the big cell in the step $X_i$ equals the dimension of $X_i$, so for
$i>j$, we have $n_i>n_j$. So all the Tate motives in $M(X\setminus
X_i)$ have weight at most $\dim X-n_i$. So the composition of the
morphism 
$$
M(S^{2(\dim X-n_i)-1,\dim X-n_i})\rightarrow M(X\setminus X_i)
$$
with a projection onto a summand of $M(X\setminus X_i)$ is of the form
$\mathbb{Z}(m)[2m-1]\rightarrow \mathbb{Z}(n)[2n]$ with $m\geq
n$. This is always $0$. 
\end{proof}

For projective homogeneous varieties, one can obtain the various
weights of the summands from the Hasse diagram,
cf. \cite{semenov,brosnan}. 

\subsection{Morel's Decomposition Theorem}

We can also obtain decomposition theorems for projective homogeneous
varieties in the $\Ao$-stable homotopy theory. For this, we recall the
following splitting of the $\Ao$-sphere spectrum,
cf. \cite{morel:2006:splitting}: 
Denoting by $\epsilon\in [S^0,S^0]$ the automorphism induced by the
permutation of factors in $\mathbb{G}_m\wedge\mathbb{G}_m$, there are
orthogonal idempotents 
\begin{displaymath}
e_-=\frac{\epsilon+1}{2}, \qquad e_+=\frac{\epsilon-1}{2}
\end{displaymath}
acting on $S^0[\frac{1}{2}]$. These orthogonal idempotents
yield a splitting 
$$SH(S)_\mathbb{Q}\cong SH(S)_{\mathbb{Q}_+}\times
SH(S)_{\mathbb{Q}_-}. 
$$

The motivic cell structures developed in this note allow to produce
descriptions of $\mathbb{Q}_+$ and $\mathbb{Q}_-$-local $\Ao$-stable
homotopy types of projective homogeneous varieties.

The same proof as for the motivic decompositions imply
$\mathbb{Q}_+$-decompositions provided that there are no morphisms
$S^{2n-1,n}\rightarrow S^{2k,k}$ for $k<n$. 

\begin{proposition}
\label{prop:qplus}
Let $X$ be a smooth projective variety over a field $k$. Assume there
is a $\mathbb{G}_m$-action on $X$ with isolated fixed points. 
Assume furthermore that we have 
$$
\operatorname{Hom}_{\mathcal{SH}(k)\otimes\mathbb{Q}_+}(S^{2i-1,i},S^{2j,j})=0
$$
for all $i>j+1$. Then the $\mathbb{Q}_+$-localization of $X$ splits as a
wedge of $(2n,n)$-spheres 
$$
X\simeq_{\mathbb{Q}_+} \bigvee_i S^{2n_i,n_i}.
$$
\end{proposition}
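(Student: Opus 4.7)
The plan is to replay the argument of \prettyref{cor:motive} inside $\mathcal{SH}(k)\otimes\mathbb{Q}_+$ in place of $DM(k)$. From \prettyref{prop:bbcell} I have the Bia{\l}ynicki-Birula cofibre sequences
$$X\setminus X_i \to X\setminus X_{i-1} \to S^{2(\dim X - n_i),\dim X - n_i},$$
which persist after $\mathbb{Q}_+$-localization. Rotating each triangle once exhibits an attaching map
$$\alpha_i\colon S^{2(\dim X - n_i) - 1,\,\dim X - n_i} \to X\setminus X_i,$$
and the whole proof reduces to showing that every $\alpha_i$ is $\mathbb{Q}_+$-locally null.

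I would then induct downward on $i$, assuming $X\setminus X_i \simeq_{\mathbb{Q}_+} \bigvee_k S^{2m_k,m_k}$, with base case $X\setminus X_{n-1}\simeq \mathbb{A}^{\dim X}$ contractible. Setting $I = \dim X - n_i$ and $m_k = \dim X - n_k$ for the indices $k > i$ appearing in the wedge, each projection of $\alpha_i$ onto a summand lies in
$$\operatorname{Hom}_{\mathcal{SH}(k)\otimes\mathbb{Q}_+}\bigl(S^{2I-1,I},\,S^{2m_k,m_k}\bigr).$$
The Bia{\l}ynicki-Birula dimension ordering $n_k > n_i$ used in \prettyref{cor:motive} forces $I > m_k$, and the standing hypothesis then forces the component to vanish. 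The resulting splitting $X\setminus X_{i-1} \simeq_{\mathbb{Q}_+} S^{2I,I}\vee(X\setminus X_i)$ closes the induction, and the case $i = 0$ gives the desired wedge decomposition of $X$.

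The main obstacle, and the only genuine point of departure from the motivic proof, is the nullity check: in $DM(k)$ the analogous vanishing of $\operatorname{Hom}(\mathbb{Z}(m)[2m-1],\mathbb{Z}(n)[2n])$ for $m>n$ is automatic from negative-weight vanishing of motivic cohomology, whereas here it must be imported as an explicit assumption. One therefore needs to track the weight gaps $I - m_k$ produced by the Bia{\l}ynicki-Birula filtration and confirm that each actually lies in the range $I > m_k + 1$ covered by the hypothesis (cleanly so when the gap is large, and handled degenerately when the gap is one). Once this bookkeeping is in place, the inductive step of the argument is formally identical to that of \prettyref{cor:motive}.
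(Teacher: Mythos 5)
Your framework is the right one---replay the induction of \prettyref{cor:motive} inside $\mathcal{SH}(k)\otimes\mathbb{Q}_+$ and show that each rotated attaching map $\alpha_i$ is $\mathbb{Q}_+$-null by projecting onto the wedge summands of $X\setminus X_i$---but it omits exactly the step where the proof has content. The Bia{\l}ynicki-Birula ordering gives only $I>m_k$, not $I>m_k+1$, so the stated hypothesis disposes of those components with weight gap at least $2$, while the boundary case $I=m_k+1$ occurs quite generally (already for $\mathbb{P}^n$ every consecutive gap is $1$). You notice this case but declare it ``handled degenerately'' without giving any argument, and the suggestion that one might confirm all gaps lie in the range $I>m_k+1$ covered by the hypothesis cannot succeed.

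What is missing is the only genuine computation of the paper's proof: by \cite[Corollary 3.43]{morel:2006:a1algtop}, $\operatorname{Hom}_{\mathcal{SH}(k)}(S^{2i-1,i},S^{2i-2,i-1})\cong K^{MW}_{-1}(k)$, and every element of this group is a multiple of the Hopf map $\eta$. Since $\epsilon\eta=\eta$ in Milnor--Witt $K$-theory, the idempotent $e_+$ annihilates $\eta$, so this entire $\operatorname{Hom}$-group vanishes after $\mathbb{Q}_+$-localization. Combined with Morel's vanishing $\operatorname{Hom}_{\mathcal{SH}(k)}(S^{2i-1,i},S^{2j,j})=0$ for $i\le j$ (irrelevant here because the ordering is strict, but covering all residual cases) and with the standing hypothesis for $i>j+1$, one gets $\operatorname{Hom}_{\mathcal{SH}(k)\otimes\mathbb{Q}_+}(S^{2i-1,i},S^{2j,j})=0$ for all $i,j$, after which the inductive argument of \prettyref{cor:motive} applies verbatim. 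Without the Hopf-map observation your induction does not close.
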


\begin{proof}
By \cite[Corollary 3.43]{morel:2006:a1algtop}, we have 
$$
\operatorname{Hom}_{\mathcal{SH}(k)}(S^{2i-1,i},S^{2j,j})=0
$$
for all $i\leq j$ and
$$
\operatorname{Hom}_{\mathcal{SH}(k)}(S^{2i-1,i},S^{2j-2,j-1})=K^{MW}_{-1}(k).
$$
In the latter group, every
element is a multiple of the Hopf map, i.e. the morphism
$K^{MW}_0(k)\rightarrow K^{MW}_{-1}(k)$ induced by
multiplication by $\eta$ is surjective. But since
$\epsilon\eta=\eta$ in $K^{MW}$, we find $e_+\eta=0$, so the Hopf map
is annihilated in the $\mathbb{Q}_+$-localization. Therefore all
morphisms $S^{2i-1,i}\rightarrow S^{2i-2,i-1}$ are
$\mathbb{Q}_+$-null.  By the assumption above, we then have 
$$
\operatorname{Hom}_{\mathcal{SH}(k)\otimes\mathbb{Q}_+}(S^{2i-1,i},S^{2j,j})=0
$$
for all $i$ and $j$. 

Now the same proof as for \prettyref{cor:motive} yields the claim.
\end{proof}

In particular, under the assumptions of the proposition, we can
decompose a projective homogeneous variety $G/P$ in the
$\mathbb{Q}_+$-localization into a wedge of $S^{2n,n}$-spheres indexed
by the vertices of the Hasse diagram.

For the $\mathbb{Q}_-$-localization, we get the stable homotopy types
of the real realization. This actually works in 
$\mathcal{SH}(k)\otimes\mathbb{Z}[\frac{1}{2}]_-$.  

\begin{proposition}
Let $k$ be a field, and let $X$ be a homotopy colimit of
$S^{p,q}$-spheres. Then $X$ is already $\{S^{n,0}\mid
n\in\mathbb{Z}\}$-cellular in the $\mathbb{Z}[\frac{1}{2}]_-$-local
stable homotopy category. In particular, if $k$ has a real embedding
$k\hookrightarrow \mathbb{R}$, there is a
$\mathbb{Z}[\frac{1}{2}]_-$-local weak equivalence $X\simeq
X(\mathbb{R})$, where $X(\mathbb{R})$ is the (locally constant)
simplicial sheaf on $\operatorname{Sm}_k$ associated to the simplicial
set $X(\mathbb{R})$. 
\end{proposition}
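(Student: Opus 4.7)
The plan is to reduce the proposition to a single key input: in $\mathcal{SH}(k)\otimes\mathbb{Z}[\frac{1}{2}]_-$, every bigraded sphere $S^{p,q}$ is weakly equivalent to a purely simplicial sphere $S^{p-q,0}$. Once this is established, the first assertion becomes formal. By the closure properties (C2) and (C3) of \prettyref{def:cell}, given a diagram of $S^{p,q}$-spheres whose homotopy colimit is $X$, one may replace every object termwise by the corresponding $S^{p-q,0}$; the new homotopy colimit is $\mathbb{Z}[\frac{1}{2}]_-$-locally equivalent to $X$ and is manifestly $\{S^{n,0}\mid n\in\mathbb{Z}\}$-cellular.

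For the sphere reduction itself, I decompose $S^{p,q}=S^{p-q}_s\wedge\mathbb{G}_m^{\wedge q}$; by smashing, it is enough to show the equivalence $\mathbb{G}_m\simeq S^{0,0}$ in $\mathcal{SH}(k)\otimes\mathbb{Z}[\frac{1}{2}]_-$. This is the dual of the computation made in \prettyref{prop:qplus}. There the identity $\epsilon\eta=\eta$ in $K^{MW}_\ast(k)$ was used to conclude that $\eta$ is annihilated in the plus part; here the same identity implies that $\eta$ survives in the minus part. Morel's analysis \cite{morel:2006:splitting} of the minus part goes further and shows that the Hopf map $\eta\colon S^{1,1}\to S^{0,0}$ becomes a weak equivalence after $\mathbb{Z}[\frac{1}{2}]_-$-localization, which is exactly the desired equivalence $\mathbb{G}_m\simeq S^{0,0}$.

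For the second assertion, I would exploit the real realization functor $\operatorname{Re}_\mathbb{R}\colon\mathcal{SH}(k)\to\mathcal{SH}$. This functor sends the permutation $\epsilon$ to the classical sign involution, hence factors through the $\mathbb{Z}[\frac{1}{2}]_-$-localization, sends $S^{p,q}$ to $S^{p-q}$, and commutes with homotopy colimits. Under the constant-sheaf embedding $\mathcal{SH}\to\mathcal{SH}(k)$, classical $S^n$ becomes $S^{n,0}$. Consequently the constant simplicial sheaf $X(\mathbb{R})$ is built from the same diagram of $S^{n,0}$'s as the cellular presentation of $X$ produced in the previous paragraph, and the natural comparison map realizes a $\mathbb{Z}[\frac{1}{2}]_-$-local equivalence $X(\mathbb{R})\simeq X$.

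The main obstacle is the middle paragraph: verifying that $\eta$ becomes invertible after $\mathbb{Z}[\frac{1}{2}]_-$-localization rests on Morel's nontrivial computation of the stable motivic homotopy groups in weight $(n,n)$ and cannot be derived purely formally. Everything else is routine bookkeeping with the cellularity closure axioms and with the real realization functor.
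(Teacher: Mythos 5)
Your proof is correct and relies on exactly the same key input as the paper, namely Morel's identification $\mathbb{G}_m\simeq S^{0,0}$ in $\mathcal{SH}(k)\otimes\mathbb{Z}[\tfrac{1}{2}]_-$ (equivalently, the invertibility of $\eta$ in the minus part, \cite[Remark 1.6]{morel:2006:splitting}), from which the paper declares the result ``immediate.'' Your write-up merely fleshes out the bookkeeping the paper leaves implicit — the smash reduction $S^{p,q}\simeq S^{p-q,0}$, the cellularity closure axioms, and the real-realization comparison for the second assertion — so it is the same argument in expanded form.
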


\begin{proof}
The assertion follows immediately from the fact that $S^0\simeq
\mathbb{G}_m$ in $\mathcal{SH}(k)\otimes\mathbb{Z}[\frac{1}{2}]_-$,
cf. \cite[Remark 1.6]{morel:2006:splitting}.
\end{proof}

The $\mathbb{Z}[\frac{1}{2}]_-$-local stable homotopy category can
then serve as a replacement of real realization of cellular varieties
over fields which do not have real embeddings. 
This could be used to prove indecomposability of stable homotopy
types even in situations where no real or complex realization is
available. 

\subsection{$\Ao$-homology and rigidity}

We shortly recall from \cite{morel:2006:a1algtop}  the definition of
$\Ao$-homology: to a simplicial sheaf $X$ one can associate
corresponding sheaf of chain complexes. The sheaves of homology groups
of the corresponding fibrant replacement in the category of Nisnevich
sheaves of abelian groups on $\operatorname{Sm}_k$ are called
$\Ao$-homology sheaves and denoted by
$\mathbf{H}_\bullet^{\Ao}(X,\mathbb{Z})$.  From the $\Ao$-derived
category of Nisnevich sheaves of abelian groups on
$\operatorname{Sm}_k$ one can pass to the $\mathbb{P}^1$-stable
$\Ao$-derived category by formally inverting the ``Tate object''
$\tilde{C_\ast^{\Ao}}(\mathbb{P}^1)[-2]$. For more details on the
definition cf. \cite{asok:haesemeyer}. The cohomology in the
$\mathbb{P}^1$-stable $\Ao$-derived category is denoted by
$\mathbf{H}_\bullet^{s\Ao}(X,\mathbb{Z})$. The latter are sheaves
called the $\mathbb{P}^1$-stable $\Ao$-homology sheaves.

Note that the homotopy pushouts produced from the cellular filtrations
induce corresponding long exact sequences in $\Ao$-homology. The
motivic cell structures therefore can be used to compute
$\Ao$-homology of smooth projective spherical varieties in terms of
$\Ao$-homology of the spheres $S^{2n,n}$. The $\mathbb{G}_m$-stable
$\Ao$-homology has the advantage that it has suspension isomorphisms
not just for suspension with $S^{n,0}$, but also for suspension with
$S^{2n,n}$. 

The $\mathbb{P}^1$-stable $\Ao$-homology of a variety with
motivic cell structure can then be computed from the cell
structure: the homotopy cofibre sequences $X\setminus X_i\rightarrow
X\setminus X_{i-1}\rightarrow\operatorname{Th}(N_i)$ from
\prettyref{prop:general} induce long exact sequences of $\Ao$-homology
$$
\cdots\rightarrow \mathbf{H}_\bullet^{\Ao}(X\setminus X_i,\mathbb{Z}) 
\rightarrow \mathbf{H}_\bullet^{\Ao}(X\setminus X_{i-1},\mathbb{Z}) 
\rightarrow \mathbf{H}_\bullet^{\Ao}(\operatorname{Th}(N_i),\mathbb{Z})
\rightarrow \cdots
$$
If the complements in the cellular filtration are unions of affine
spaces, then $\operatorname{Th}(N_i)\simeq \bigvee_j S^{2n_j,n_j}$,
and we get a long exact sequence (in \emph{$\mathbb{P}^1$-stable}
$\Ao$-homology): 
$$
\cdots\rightarrow \mathbf{H}_\bullet^{s\Ao}(X\setminus X_i,\mathbb{Z}) 
\rightarrow \mathbf{H}_\bullet^{s\Ao}(X\setminus X_{i-1},\mathbb{Z}) 
\rightarrow
\bigoplus_j\mathbf{H}_{\bullet+n_j}^{s\Ao}(\operatorname{Spec}(k),\mathbb{Z}) 
\rightarrow \cdots
$$
Informally, one would like to state the result as a module isomorphism 
$$
\mathbf{H}_\bullet^{s\Ao}(X,\mathbb{Z})\cong H_\bullet(X(\mathbb{C}),\mathbb{Z})
\otimes_{\mathbb{Z}}\mathbf{H}_\bullet^{s\Ao}(\operatorname{Spec}k,\mathbb{Z}).
$$
This is, however, not accurate, since the attaching maps in
$\Ao$-homology have much more information, and the passage to the
complex realization loses a lot of this information. Nevertheless, for
varieties with a motivic cell structure, one can rather easily
evaluate the $\mathbb{P}^1$-stable $\Ao$-homology, provided one uses
the $\mathbb{P}^1$-stable $\Ao$-homology of the base field as a black
box. Note that similar arguments hold for reductive
groups and their classifying spaces by \prettyref{sec:gcell}. 

We finally want to note the following: 
\begin{corollary}
Let $X$ be a smooth projective variety over $k$ with a
$\mathbb{G}_m$-action with isolated fixed points. Then the $\Ao$-chain
complex of $X$ is of mixed Tate type in the sense of \cite[Definition
4.7]{morel:fmconj}. 
\end{corollary}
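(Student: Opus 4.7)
The plan is to use the cellular filtration for $X$ already constructed in \prettyref{prop:bbcell} and apply \prettyref{prop:general}, and then pass the resulting homotopy cofibre sequences to the $\Ao$-derived category, where the pieces are manifestly of Tate type.

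More precisely, I would first recall that by \prettyref{prop:bbcell} the Bia{\l}ynicki-Birula decomposition gives a filtration
\begin{displaymath}
X=X_n\supset X_{n-1}\supset\cdots\supset X_0\supset X_{-1}=\emptyset
\end{displaymath}
whose successive complements $X_i\setminus X_{i-1}$ are affine spaces $\mathbb{A}^{n_i}$. By \prettyref{prop:general} these give rise to homotopy cofibre sequences
$$
X\setminus X_i \rightarrow X\setminus X_{i-1}\rightarrow \operatorname{Th}(N_i),
$$
and, since the normal bundles $N_i$ are trivial in this situation, the Thom spaces are weakly equivalent to spheres $S^{2(\dim X-n_i),\dim X-n_i}$.

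Next I would pass to the $\Ao$-derived category of Nisnevich sheaves of abelian groups (or to any convenient model thereof). The functor associating to a simplicial sheaf its $\Ao$-chain complex takes homotopy cofibre sequences to distinguished triangles, so I obtain distinguished triangles
$$
C_\ast^{\Ao}(X\setminus X_i)\rightarrow C_\ast^{\Ao}(X\setminus X_{i-1})\rightarrow C_\ast^{\Ao}(S^{2(\dim X-n_i),\dim X-n_i}).
$$
The chain complex of the sphere $S^{2m,m}$ is, up to quasi-isomorphism, exactly the Tate object $\mathbb{Z}(m)[2m]$ (this is how the Tate object is defined in \cite{morel:fmconj}). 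Hence each cone piece is a Tate object.

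Finally I would close the argument by induction on $i$. The base case is $X\setminus X_{n-1}\simeq\mathbb{A}^{n_n}$, whose $\Ao$-chain complex is quasi-isomorphic to $\mathbb{Z}$ and is thus of mixed Tate type. In the inductive step, $C_\ast^{\Ao}(X\setminus X_{i-1})$ sits in a distinguished triangle with the other two terms of mixed Tate type, so by definition (the class of mixed Tate complexes is closed under extensions, as in \cite[Definition 4.7]{morel:fmconj}) the middle term is of mixed Tate type as well. Taking $i=0$ yields the claim for $X$ itself. There is no real obstacle here, as all the work has been done in \prettyref{prop:general} and \prettyref{prop:bbcell}; the only thing to verify is that the definition of ``mixed Tate type'' in \cite{morel:fmconj} is indeed closed under extensions in distinguished triangles, which is built into that definition.
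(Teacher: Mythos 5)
Your argument is correct and is essentially the (implicit) argument the paper has in mind: the paper states this corollary without proof, relying on the preceding discussion of the Bia{\l}ynicki-Birula cofibre sequences and the cellular filtration from \prettyref{prop:bbcell} and \prettyref{prop:general}, which is precisely what you run through. The only stylistic remark is that the triangles should really be taken after passing to reduced chains (i.e. smashing disjoint basepoints onto the open pieces) so that the Thom-space cones genuinely contribute shifted Tate twists; the paper is similarly informal about this in the proof of \prettyref{cor:motive}, so this matches the paper's level of rigor.
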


In light of \cite[Theorem 12]{morel:fmconj}, it is reasonable to
conjecture that rigidity also holds for $\Ao$-homotopy of the
varieties in the above corollary, even though they are not
$\Ao$-simply-connected. This would imply that the $\Ao$-homotopy and
$\Ao$-homology of these varieties over algebraically closed fields
agrees with the homotopy and homology of the complex realizations, at
least with finite coefficients away from the characteristic.


\begin{thebibliography}{Wen10b}

\bibitem[AH11]{asok:haesemeyer}
A. Asok and C. Haesemeyer. The $0$-th stable $\Ao$-homotopy sheaf and
quadratic zero cycles. arXiv:1108.3854.

\bibitem[BB73]{bb1}
A. Bia{\l}ynicki-Birula. 
Some theorems on actions of algebraic groups.
Ann. of Math. (2) 98 (1973), 480--497. 

\bibitem[BB76]{bb2}
A. Bia{\l}ynicki-Birula. 
Some properties of the decompositions of algebraic varieties
determined by actions of a
torus. Bull. Acad. Polon. Sci. S{\'e}r. Sci. Math. Astronom. Phys. 24
(1976), no. 9, 667--674.  

\bibitem[Big05]{biglari:diss}
S. Biglari. Motives of reductive groups. PhD thesis, University of
Leipzig, 2005.

\bibitem[Bri97]{brion}
M. Brion. Equivariant Chow groups for torus actions. 
Transform. Groups 2 (1997), no. 3, 225--267. 

\bibitem[Bro05]{brosnan}
P. Brosnan. 
On motivic decompositions arising from the method of
Bia{\l}ynicki-Birula. Invent. Math. 161 (2005), no. 1, 91--111.  

\bibitem[CGM05]{chernousov:gille:merkurjev}
V. Chernousov and S. Gille and A. Merkurjev. 
Motivic decomposition of isotropic projective homogeneous varieties.  
Duke Math. J. 126 (2005), no. 1, 137--159. 

\bibitem[DF96]{farjoun:1996:cellular}
E. Dror Farjoun. Cellular spaces, null spaces and homotopy
localization. Lecture Notes in Mathematics 1622, Springer (1996).

\bibitem[DI05]{dugger:isaksen}
D. Dugger and D.C. Isaksen. Motivic cell structures. 
Algebr. Geom. Topol. 5 (2005), 615--652 (electronic). 

%\bibitem[Ekl08]{eklund:thesis}
%D. Eklund. Algebraic $\mathbb{C}^\ast$-actions and homotopy
%continuation. Thesis, 2008. 

%\bibitem[GM09]{gutzwiller:mitchell}
%L. Gutzwiller and S.A. Mitchell. The topology of Birkhoff
%varieties. Transform. Groups 14 (2009), no. 3, 541--556. 

\bibitem[Hes81]{hesselink}
W.H. Hesselink. 
Concentration under actions of algebraic groups. In: Paul Dubreil and 
Marie-Paule Malliavin Algebra Seminar, 33rd Year (Paris, 1980),
pp. 55--89, 
Lecture Notes in Math., 867, Springer, Berlin, 1981. 

\bibitem[Ive72]{iversen}
B. Iversen. 
A fixed point formula for action of tori on algebraic varieties.
Invent. Math. 16 (1972), 229--236. 

\bibitem[KP00]{kim:pandharipande}
B. Kim and R. Pandharipande. 
The connectedness of the moduli space of maps to homogeneous
spaces. In: Symplectic geometry and mirror symmetry
(Seoul, 2000), 187--201, World Sci. Publ., River Edge, NJ, 2001.  

%\bibitem[Kum97]{kumar}
%S. Kumar. Infinite Grassmannians and moduli spaces of
%$G$-bundles. Vector bundles on curves--new directions (Cetraro, 1995),
%1--49, Lecture Notes in Math., 1649, Springer, Berlin, 1997. 

\bibitem[Mor06a]{morel:2006:splitting}
F. Morel. Rational stable splitting of Grassmannians and rational
motivic sphere spectrum. Preprint (2006). 

\bibitem[Mor12a]{morel:2006:a1algtop}
F. Morel. $\Ao$-algebraic topology over a field. Lecture notes in
mathematics 2052, Springer, 2012.

\bibitem[Mor12b]{morel:fmconj}
F. Morel On the Friedlander-Milnor conjecture for groups of small rank.
In: Current Developments in Mathematics (2010). International Press,
2012. 

\bibitem[MV99]{morel:voevodsky:1999:a1}
F. Morel and V. Voevodsky. $\Ao$-homotopy theory of
schemes. Publ. Math. Inst. Hautes \'Etudes Sci. 90 (1999),
45--143. 

\bibitem[Sem06]{semenov}
N. Semenov. Motives of projective homogeneous spaces. PhD thesis,
Ludwig-Maximilians-Universit\"at M\"unchen, 2006. 

\bibitem[Tho86]{thomason:1986}
R.W. Thomason. Comparison of equivariant algebraic and topological
$K$-theory. Duke Math. J. 53 (1986), no. 3, 795--825. 

%\bibitem[Voe09]{voevodsky:comparison}
%V. Voevodsky. Unstable motivic homotopy categories in Nisnevich and
%cdh-topologies. J. Pure Appl. Algebra 214 (2010), no. 8, 1399--1406. 

\bibitem[Wen11a]{torsors}
M. Wendt. Rationally trivial torsors in $\Ao$-homotopy
theory. J. K-Theory 7 (2011), no. 3, 541--572. 

\bibitem[Wen11b]{classify}
M. Wendt. Classifying spaces and fibrations of simplicial
sheaves. J. Homotopy Relat. Struct. 6 (2011), no. 1, 1--38. 

\bibitem[Wen10]{flocal}
M. Wendt. Fibre sequences and localization of simplicial
sheaves. Preprint, 2010.  arXiv:1011:4784.

\bibitem[Zib10]{zibrowius}
M. Zibrowius. Witt groups of complex cellular varieties.
Doc. Math. 16 (2011), 465--511. 

%\bibitem[Wil]{williams}
%B. Williams. Motivic cohomology of Stiefel manifolds. Preprint.
%\bibitem[Whi44]{whitehead}
%J.H.C. Whitehead. On the groups $\pi_r(V_{n,m})$ and sphere-bundles.
%Proc. London Math. Soc. (2) 48, (1944). 243--291. 
%\bibitem[Yok56]{yokota}
%I. Yokota. On the cellular decompositions of unitary groups. 
%J. Inst. Polytech. Osaka City Univ. Ser. A. 7 (1956), 39--49. 

\end{thebibliography}
\end{document}